\newtheorem{theorem}{Theorem}[section]
\newtheorem{lemma}{Lemma}[section]
\newtheorem{example}{Example}[section]
\numberwithin{equation}{section}
\begin{document}
	\title{\bf Analysis of a WSGD scheme for backward fractional Feynman-Kac equation with nonsmooth data}
\author{Liyao Hao\thanks{Center for Applied Mathematics, Tianjin University, Tianjin 300072, China.}
        \and
        Wenyi Tian\thanks{Corresponding author. Center for Applied Mathematics, Tianjin University, Tianjin 300072, China. This author was partially supported by the National Natural Science Foundation of China (Nos. 12071343 and 11701416). Email: twymath@gmail.com}
    }
    \date{}
	\maketitle
	\begin{abstract} 
	In this paper, we propose and analyze a second-order time-stepping numerical scheme for the inhomogeneous backward fractional Feynman-Kac equation with nonsmooth initial data. The complex parameters and time-space coupled Riemann-Liouville fractional substantial integral and derivative in the equation bring challenges on numerical analysis and computations. The nonlocal operators are approximated by using the weighted and shifted Gr\"{u}nwald difference (WSGD) formula. Then a second-order WSGD scheme is obtained after making some initial corrections. Moreover, the error estimates of the proposed time-stepping scheme are rigorously established without the regularity requirement on the exact solution. Finally, some numerical experiments are performed to validate the efficiency and accuracy of the proposed numerical scheme.
	
	{\bf Keywords}: Inhomogeneous backward fractional Feynman-Kac equation; Fractional substantial derivative; Weighted and shifted Gr\"{u}nwald difference formula
	
	{\bf AMS subject classifications:} 65M06, 65M15, 35R11, 35R05
	\end{abstract}
\section{Introduction}\label{Section1}
	
	Let $G(x,\mathbb{A},t)$ denote the joint probability density function of finding the particle on $\mathbb{A}$ at the time $t$ with the initial position of the particle at $x$, and $\mathbf{i}$ the imaginary unit. For particles with power-law waiting time, the governing equation of $G(x,\rho,t)=\int_{-\infty}^{\infty}G(x,\mathbb{A},t)e^{-\mathbf{i}\rho \mathbb{A}}d\mathbb{A}$ is the backward fractional Feynman-Kac equation \cite{CarmiTB:2010,DengHWX:2020} as follows
	\begin{equation}\label{eq:bfFKe}
	\left\{\begin{aligned}
	&\frac{\partial G(x,\rho,t)}{\partial t}={}_0D_{t}^{1-\alpha,x}\Delta G(x,\rho,t)\\
	&\qquad\qquad\qquad
	-\rho U(x)G(x,\rho,t)+f(x,\rho,t),&(x,t)\in \Omega\times(0,T],\\
	&G(x,\rho,0)=G_0(x),&x\in\Omega,\\
	&G(x,\rho,t)=0,&(x,t)\in\partial\Omega\times(0,T],\\
	\end{aligned}\right.
    \end{equation}
	where $\alpha\in (0,1)$, $\Delta$ represents the Laplace operator, $\Omega$ is a bounded convex polygonal domain in $\mathbb{R}^n(n=1,2,3)$ with boundary $\partial\Omega$, the function $U(x)$ is bounded in $\bar{\Omega}$, and $f(x,\rho,t)$ is the source term. ${}_0D_{t}^{\nu,x}$ refers to the Riemann-Liouville fractional substantial derivative \cite{LiDZ:2019}, which is defined by
	\begin{equation}\label{eq:defRLfsd}
	\begin{aligned}
	{}_{0} D_{t}^{\nu, x} G(x, \rho, t) &=\frac{1}{\Gamma(1-\nu)}\left[\frac{\partial}{\partial t}+\rho U(x)\right] \int_{0}^{t}(t-\xi)^{-\nu} e^{-(t-\xi) \rho U(x)} G(x, \rho, \xi) d \xi \\
	&=e^{-t \rho U(x)}{}_{0} D_{t}^{\nu}\left(e^{t \rho U(x)} G(x, \rho, t)\right),\quad\nu\in(0,1),
	\end{aligned}
	\end{equation}
	and ${}_{0} D_{t}^{\nu}$ denotes the Riemann-Liouville fractional derivative \cite{Podlubny:1999} defined by
	\begin{equation}\label{eq:defRLfd}
	{ }_{0} D_{t}^{\nu} G(x, \rho, t)=\frac{1}{\Gamma(1-\nu)} \frac{\partial}{\partial t} \int_{0}^{t}(t-\xi)^{-\nu} G(x, \rho, \xi) d \xi, \quad \nu \in(0,1),
	\end{equation}
	where $\Gamma(s)=\int_{0}^{\infty}t^{s-1}e^{-t}dt$ denotes the Euler gamma function.
	
	The fractional Feynman-Kac equations describe the distribution of the particles' path functionals defined as $\mathbb{A}=\int_{0}^{t}U[x(\tau)]d\tau$ in the anomalous diffusion process, where $x(t)$ is the trajectory of the particle and $U(x)$ has different choices depending on practical applications \cite{DengHWX:2020}. There are two kinds of Feynman-Kac equations: the forward equation and the backward one, and the latter just focuses on the distribution of the functionals. The governing equations for the distribution of the functionals refer to the fractional Feynman-Kac equations if the particles have jump length distributions and/or power-law waiting time, which were derived by Carmi, Turgeman and Barkai \cite{CarmiTB:2010,TurgemanCB:2009}.
	
	A number of efficient numerical algorithms have been designed for solving time-fractional partial differential equations, such as finite difference method \cite{FanLS:2023,JinLZ:2019,LinX:2007,LubichST:1996,StynesOG:2017,YanKF:2018,ZhangSL:2014,ZhouT:2022}, discontinuous Galerkin method \cite{LiLX:2019,MustaphaAF:2014}, spectral method \cite{ChenSZZ:2020,LiZL:2012,LiX:2009,ZhengLTA:2015}, and the references therein. However, the numerical investigations of the fractional Feynman-Kac equations are relatively limited. The challenges on theoretical and numerical issues come from the time-space coupled non-local derivative (fractional substantial derivative) and the complicated parameters in the equations. Chen and Deng \cite{ChenD:2015a} established high-order finite difference approximations for fractional substantial derivatives based on the Lubich method \cite{Lubich:1986b}, which were further applied to numerically solving the forward and backward fractional Feynman-Kac equation \cite{DengCB:2015}.
	For the time fractional substantial diffusion equation, the authors in \cite{HaoCL:2017} designed a compact finite difference scheme with second-order in time and fourth-order in space. The stability and convergence of the scheme are proved for smooth solutions.
	High order difference schemes were proposed in \cite{ChenD:2015b} for the fractional substantial diffusion equation with truncated L\'evy flights and smooth solutions.
	In \cite{DengLQW:2018}, a first-order time-stepping method was provided to solve a forward and tempered fractional Feynman-Kac equation with error analysis in the measure norm.
	Recently, the first-order and second-order time-stepping schemes were introduced in \cite{SunND:2020} for the homogeneous backward fractional Feynman-Kac equation with nonsmooth initial data by using the convolution quadrature approximations of the fractional substantial derivative in \cite{ChenD:2015a}. Moreover, \cite{SunND:2021a} developed high-order fully discrete schemes with some correction terms for the backward fractional Feynman-Kac equation by combing backward difference formulas (BDF) convolution quadrature in time and finite element method in space.
	
	In this work, we dedicate to design new time-stepping scheme for the backward fractional Feynman-Kac equation \eqref{eq:bfFKe} based on the weighted and shifted Gr\"{u}nwald formula proposed in \cite{TianZD:2015}, and discuss the corresponding corrected scheme for the inhomogeneous source term and nonsmooth initial data.
	Our main contributions in this paper are as follows. (i) Based on the weighted and shifted Gr\"{u}nwald formula proposed in \cite{TianZD:2015}, a new corrected second-order time discretization method is proposed for the equation \eqref{eq:bfFKe} with inhomogeneous source term and nonsmooth initial data. (ii) The error estimates of the proposed discrete scheme are also rigorously analyzed.
	
	The structure of the rest of this paper is as follows. Some preliminaries and essential lemmas are introduced and proved in Section \ref{Section2}. In Section \ref{Section3}, we derive a new second order time-stepping scheme for the equation \eqref{eq:bfFKe} by using the weighted and shifted Gr\"{u}nwald formula to approximate the Riemann-Liouville fractional substantial derivative in time with some correction terms. In Section \ref{Section4}, we establish the error estimates of the proposed discrete scheme for the homogeneous and inhomogeneous problem with the nonsmooth initial data. The numerical experiments are provided in order to evaluate the effectiveness and the convergence rates of our proposed numerical scheme in Section \ref{Section5}. We wrap up this paper with some arguments and outlooks in the final section.

\section{Preliminaries}\label{Section2}
	By using the relationship between the Caputo and Riemann-Liouville fractional derivatives \cite{Podlubny:1999}, the backward fractional Feynman-Kac equation \eqref{eq:bfFKe} can be reformulated as an equivalent form \cite{DengCB:2015,SunND:2021a}, that is
	\begin{equation}\label{2.1}
	\left\{\begin{aligned}
	&{}_0^CD_{t}^{\alpha,x}G(x,\rho,t)-\Delta G(x,\rho,t) ={}_0I_{t}^{1-\alpha,x}f(x,\rho,t),&(x,t)\in \Omega\times(0,T],\\
	&G(x,\rho,0)=G_0(x)&x\in\Omega,\\
	&G(x,\rho,t)=0&(x,t)\in\partial\Omega\times(0,T],\\
	\end{aligned}\right.
	\end{equation}
	where ${}_0I_{t}^{1-\alpha,x}$ denotes Riemann-Liouville fractional substantial integral \cite{LiDZ:2019}, which is defined by
	$$
	{}_0I_{t}^{1-\alpha,x}f(t)=\frac{1}{\Gamma(1-\alpha)}\int_{0}^{t}(t-s)^{-\alpha}e^{-\rho U(x)(t-s)}f(s)ds,~\alpha\in(0,1).
	$$
    The notation ${}_0^CD_{t}^{\alpha,x}$ represents the Caputo fractional substantial derivative  \cite{LiDZ:2019} defined by
    $$
    {}_0^CD_{t}^{\alpha,x}G(x,\rho,t)=e^{-t\rho U(x)}{}_0^CD_{t}^{\alpha}\big(e^{t\rho U(x)}G(x,\rho,t)\big),~\alpha\in(0,1),
    $$
    and ${}_{0}^C D_{t}^{\alpha}$ stands for the Caputo fractional derivative \cite{Podlubny:1999}
    \begin{equation*}
    { }_{0}^C D_{t}^{\alpha} G\left(x, \rho, t\right)=\frac{1}{\Gamma(1-\alpha)}  \int_{0}^{t}(t-\xi)^{-\alpha}\frac{\partial}{\partial \xi} G\left(x, \rho, \xi\right) d \xi, \ \alpha \in(0,1).
    \end{equation*}
    It is mentioned in \cite{SunND:2021a} that the separation of the operators ${}_0D_{t}^{1-\alpha,x}$ and $\Delta$ in \eqref{2.1} compared with \eqref{eq:bfFKe} can reduce the influences of the regularity of $U(x)$ on convergence rate in space. Hence we consider to establish the numerical scheme based on \eqref{2.1} instead of \eqref{eq:bfFKe} in the following.
	The notations $G(t)$, $G_0$, $f(t)$ refer to the abbreviations of $G(x, \rho, t)$, $G_0(x)$, $f(x,\rho,t)$ respectively, and $\|\cdot\|$ denotes the $L^2$ norm and the operator norm from $L^2(\Omega)$ to $L^2(\Omega)$. Throughout this paper, the generic constant $C>0$ may be different at different occurrences but it is always independent of the time step size $\tau$, and $\epsilon> 0$ is arbitrary small.

	We briefly review the Laplace transform of the fractional substantial integral and derivative.
	\begin{lemma}[\cite{LiDZ:2019}]\label{l2.1}
	The Laplace transform of the Riemann-Liouville fractional substantial integral with $\alpha\in(0,1)$ is
	$$
	  {}_0\widetilde{I_t^{\alpha,x}G}(z)=\big(\beta(z)\big)^{-\alpha}\widetilde{G}(z),
	$$
	the Laplace transform of the Riemann-Liouville fractional substantial derivative with $\alpha\in(0,1)$ is
	$$
	{}_0\widetilde{D_t^{\alpha,x}G}(z)=\big(\beta(z)\big)^\alpha\widetilde{G}(z)-{}_0I_t^{1-\alpha}\big(e^{\rho U(x)t}G(t)\big)|_{t=0},
	$$
	and the Laplace transform of the Caputo fractional substantial derivative with $\alpha\in(0,1)$ is
	$$
	{}_0^C\widetilde{D_t^{\alpha,x}G}(z)=\big(\beta(z)\big)^\alpha\widetilde{G}(z)-\big(\beta(z)\big)^{\alpha-1}G(0),
	$$
	where
	\begin{equation}\label{2.2}
	\beta(z):=\beta(z,x):=z+\rho U(x),
	\end{equation}
	`$\sim$' means taking the Laplace transform and ${}_0I_t^{1-\alpha}$ stands for the Riemann-Liouville fractional integral.
	\end{lemma}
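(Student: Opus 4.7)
The plan is to reduce all three identities to the known Laplace transforms of the classical Riemann--Liouville integral/derivative and Caputo derivative by exploiting the conjugation identity ${}_0D_t^{\nu,x}G = e^{-t\rho U(x)}\,{}_0D_t^{\nu}(e^{t\rho U(x)}G)$ already stated in \eqref{eq:defRLfsd} (together with its obvious analogues for the substantial integral and for the Caputo version), combined with the elementary Laplace translation rule $\mathcal{L}\{e^{-at}\phi(t)\}(z) = \widetilde{\phi}(z+a)$. Choosing $a = \rho U(x)$ turns every occurrence of the Laplace variable $z$ into $\beta(z) = z + \rho U(x)$, which is exactly the substitution that appears in the three claimed formulas.

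For the substantial integral I would first recognise
$$
{}_0I_t^{\alpha,x}G(t) = (k_\alpha \ast G)(t), \qquad k_\alpha(t) := \frac{t^{\alpha-1}}{\Gamma(\alpha)}\,e^{-\rho U(x) t},
$$
whose kernel has Laplace transform $\widetilde{k_\alpha}(z) = (z+\rho U(x))^{-\alpha} = (\beta(z))^{-\alpha}$ by the translation rule applied to the standard identity $\mathcal{L}\{t^{\alpha-1}/\Gamma(\alpha)\}(z) = z^{-\alpha}$. The convolution theorem then delivers the first formula in one line.

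For the Riemann--Liouville substantial derivative, set $H(t) := e^{t\rho U(x)}G(t)$, so that $\widetilde{H}(w) = \widetilde{G}(w - \rho U(x))$ and in particular $\widetilde{H}(\beta(z)) = \widetilde{G}(z)$. Applying the translation rule to the conjugation identity \eqref{eq:defRLfsd} gives ${}_0\widetilde{D_t^{\alpha,x}G}(z) = {}_0\widetilde{D_t^{\alpha}H}(\beta(z))$, and substituting the classical transform ${}_0\widetilde{D_t^{\alpha}H}(w) = w^{\alpha}\widetilde{H}(w) - {}_0I_t^{1-\alpha}H(t)|_{t=0}$ at $w = \beta(z)$ produces $(\beta(z))^{\alpha}\widetilde{G}(z) - {}_0I_t^{1-\alpha}(e^{\rho U(x)t}G(t))|_{t=0}$, as claimed. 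The Caputo case is handled in exactly the same way: using the classical transform ${}_0^C\widetilde{D_t^{\alpha}H}(w) = w^{\alpha}\widetilde{H}(w) - w^{\alpha-1}H(0)$ and noting that $H(0) = G(0)$ yields $(\beta(z))^{\alpha}\widetilde{G}(z) - (\beta(z))^{\alpha-1}G(0)$ at $w = \beta(z)$.

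Everything here is routine once the translation rule is in place, so there is no genuine obstacle. The one point requiring a little care is that in the Riemann--Liouville case the surviving initial-value term must be written in the substantial form ${}_0I_t^{1-\alpha}(e^{\rho U(x)t}G(t))|_{t=0}$ rather than the plain ${}_0I_t^{1-\alpha}G(t)|_{t=0}$, because the conjugation by $e^{t\rho U(x)}$ sits \emph{inside} the fractional integral; tracking this factor correctly is the only bookkeeping step that might be mishandled.
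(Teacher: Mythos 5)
Your argument is correct. The paper gives no proof of this lemma---it is quoted directly from \cite{LiDZ:2019}---so there is no in-paper argument to compare against; your derivation (convolution theorem for the kernel $k_\alpha(t)=t^{\alpha-1}e^{-\rho U(x)t}/\Gamma(\alpha)$, then the translation rule $\mathcal{L}\{e^{-at}\phi\}(z)=\widetilde{\phi}(z+a)$ applied to the conjugation identities with $H(t)=e^{t\rho U(x)}G(t)$ and the key observation $\widetilde{H}(\beta(z))=\widetilde{G}(z)$) is the standard route, and you correctly keep the conjugated function inside the surviving initial-value term ${}_0I_t^{1-\alpha}\big(e^{\rho U(x)t}G(t)\big)|_{t=0}$.
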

	
	Next we define sectors $\Sigma_\theta$ and $\Sigma_{\theta,\kappa}$ with $\kappa>0$ and $\theta\in(\pi/2,\pi)$ in the complex plane $\mathbb{C}$ as
	\begin{align}
	  &\Sigma_{\theta}=\big\{z \in \mathbb{C} \backslash\{0\}:|\arg z| \leq \theta\big\}, \\
	  &\Sigma_{\theta, \kappa}=\big\{z \in \mathbb{C}:|z| \geq \kappa,|\arg z| \leq \theta\big\}.\label{eq:Sigtk}
	\end{align}
	The contour $\Gamma_{\theta,\kappa}$ oriented with an increasing imaginary part is defined by
	\begin{equation}\label{eq:Gamtk}
	\Gamma_{\theta,\kappa}=\big\{z\in\mathbb{C}:|z|=\kappa,|\arg z| \leq \theta\big\}\cup\big\{z\in\mathbb{C}:|z|\geq\kappa,~|\arg z|=\theta\big\}.
	\end{equation}
	In addition, $\Gamma_{\theta,\kappa}^\tau$ is given by
	\begin{equation}\label{eq:Gamtkt}
	\Gamma_{\theta,\kappa}^\tau=\big\{z\in\mathbb{C}:|z|=\kappa,~|\arg z| \leq \theta\big\}\cup\big\{z\in\mathbb{C}:\kappa\leq|z|\leq\frac{\pi}{\tau \sin\theta},~|\arg z|=\theta\big\}.
	\end{equation}
	
	According to Lemma \ref{l2.1}, the Laplace transform of \eqref{2.1} with respect to time follows
	\begin{equation}\label{2.3}
	\widetilde{G}(z)=\big(\beta (z)^\alpha+A\big)^{-1}\beta (z)^{\alpha-1}G_0+\big(\beta(z)^\alpha+A\big)^{-1}\beta(z)^{\alpha-1}\tilde{f}(z),
	\end{equation}
	where $\beta(z)$ is given by \eqref{2.2}.
	Then taking the inverse Laplace transform on both sides of \eqref{2.3} implies that the solution to \eqref{2.1} can be formulated as
	\begin{equation}\label{4.2}
	\begin{aligned}
	G(t)&=\frac{1}{2\pi \mathbf{i}}\int_{\Gamma_{\theta ,\kappa }}e^{zt}\big(\beta (z)^{\alpha }+A\big)^{-1}\beta (z)^{\alpha-1}\big(G_0+\tilde{f}(z)\big)dz.
	\end{aligned}
	\end{equation}

	Next we introduce the following lemma about $\beta(z)$, which is important in the error estimates in Section \ref{Section4}.
	\begin{lemma}[\cite{DengLQW:2018}]\label{l2.2}
	Let $\beta(z)$ be defined in \eqref{2.2} and $U(x)$ be bounded in $\bar{\Omega}$. By choosing $\theta\in(\frac{\pi}{2},\pi)$ sufficiently close to $\frac{\pi}{2}$ and $\kappa>0$ sufficiently large (depending on the value $|\rho|\|U(x)\|_{L^\infty(\bar{\Omega})}$), then
	for $z\in\Sigma_{\theta,\kappa}$, we have
	$$
	\begin{aligned}
	& C_1|z|\le |\beta (z)|\le C_2|z|, \\
	& \left\| A(\beta (z)^{\alpha }+A)^{-1} \right\|\le C, \\
	& \left\| (\beta(z)^{\alpha}+A)^{-1} \right\|\le C| z |^{-\alpha},
	\end{aligned}
	$$
	where $A=-\Delta$, $\Sigma_{\theta,\kappa}$ is defined by \eqref{eq:Sigtk}, and $C, C_1, C_2$ are positive constants.
	\end{lemma}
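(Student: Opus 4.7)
The plan is to prove the three bounds in the order stated, treating (1) as an elementary perturbation estimate, (3) as a consequence of the sectoriality of $A=-\Delta$, and (2) as an algebraic corollary of (3).

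For bound (1), I would set $M:=|\rho|\,\|U(x)\|_{L^\infty(\bar\Omega)}$ and note that $\beta(z)-z=\rho U(x)$ is uniformly bounded by $M$. Choosing $\kappa\ge 2M$, the reverse and forward triangle inequalities give $\tfrac12|z|\le|z|-M\le|\beta(z)|\le|z|+M\le\tfrac32|z|$ for every $z\in\Sigma_{\theta,\kappa}$, hence $C_1=1/2$ and $C_2=3/2$ (after possibly enlarging $\kappa$).

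The heart of the argument is controlling $\arg\beta(z)^\alpha$ so that $-\beta(z)^\alpha$ lies in the resolvent sector of $A$. Since $|\beta(z)-z|\le M$ and $|z|\ge\kappa$, elementary geometry yields $|\arg\beta(z)-\arg z|\le\arcsin(M/|z|)\le\arcsin(M/\kappa)=:\delta(\kappa)$, with $\delta(\kappa)\to 0$ as $\kappa\to\infty$. Thus for $z\in\Sigma_{\theta,\kappa}$ we have $|\arg\beta(z)^\alpha|=\alpha|\arg\beta(z)|\le\alpha(\theta+\delta(\kappa))$. Picking $\theta>\pi/2$ sufficiently close to $\pi/2$ and $\kappa$ large enough (both depending on $M$), we can guarantee $\alpha(\theta+\delta(\kappa))<\theta_0$ for some fixed $\theta_0<\pi$. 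This places $\beta(z)^\alpha$ inside a sector strictly separated from the negative real axis, which is where the resolvent $(\lambda+A)^{-1}$ may fail to exist or fail to satisfy the sharp bound.

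Since $A=-\Delta$ with homogeneous Dirichlet boundary conditions on a bounded convex polygonal domain is a positive self-adjoint operator, it is sectorial of angle $0$, and for any $\theta_0<\pi$ the resolvent estimate $\|(\lambda+A)^{-1}\|\le C_{\theta_0}/|\lambda|$ holds uniformly on the sector $|\arg\lambda|\le\theta_0$. Applying this with $\lambda=\beta(z)^\alpha$ and using bound (1), I obtain
\begin{equation*}
\|(\beta(z)^\alpha+A)^{-1}\|\le \frac{C}{|\beta(z)|^\alpha}\le \frac{C}{(C_1|z|)^\alpha}\le C|z|^{-\alpha},
\end{equation*}
which is bound (3). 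Finally, bound (2) follows from the resolvent identity $A(\beta(z)^\alpha+A)^{-1}=I-\beta(z)^\alpha(\beta(z)^\alpha+A)^{-1}$ by taking norms and combining with bound (3) and the upper part of bound (1):
\begin{equation*}
\|A(\beta(z)^\alpha+A)^{-1}\|\le 1+|\beta(z)|^\alpha\cdot\|(\beta(z)^\alpha+A)^{-1}\|\le 1+C_2^\alpha C=:C.
\end{equation*}

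The main obstacle, and the only non-routine step, is the sector containment in the second paragraph: one must verify that the angular perturbation introduced by the lower-order shift $\rho U(x)$ in $\beta(z)$ can be tamed by simultaneously taking $\theta$ close to $\pi/2$ and $\kappa$ large, in a way compatible with the fixed exponent $\alpha\in(0,1)$. Once $\beta(z)^\alpha$ is trapped in a subsector of the analyticity region of $(\cdot+A)^{-1}$, the rest of the proof is purely algebraic.
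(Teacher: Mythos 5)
The paper does not actually prove this lemma; it is quoted verbatim from \cite{DengLQW:2018}, so there is no internal argument to compare against. Your reconstruction of bound (1), and your reduction of bound (2) to bound (3) via the identity $A(\beta(z)^\alpha+A)^{-1}=I-\beta(z)^\alpha(\beta(z)^\alpha+A)^{-1}$, are correct as stated.

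There is, however, one genuine gap in your derivation of (3). By \eqref{2.2}, $\beta(z)=\beta(z,x)=z+\rho U(x)$ depends on $x$, so $\beta(z)^\alpha$ inside the operator $(\beta(z)^\alpha+A)^{-1}$ is not a complex number but the operator of multiplication by the function $x\mapsto\big(z+\rho U(x)\big)^\alpha$. The sectorial resolvent estimate $\|(\lambda+A)^{-1}\|\le C_{\theta_0}/|\lambda|$ that you invoke is a statement about scalar $\lambda$; you cannot ``apply it with $\lambda=\beta(z)^\alpha$'', because freezing $x$ does not produce an inverse of the operator $M_{\beta^\alpha}+A$ (this would only be legitimate if $U$ were constant). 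Your sector-containment computation is still the right preparatory step, but it must be followed by an argument that handles the multiplication operator. Two standard ways to close the gap: (i) a numerical-range argument --- since $|\arg\beta(z,x)^\alpha|\le\alpha(\theta+\delta(\kappa))<\pi/2$ for $\theta$ near $\pi/2$, $\kappa$ large and $\alpha\in(0,1)$, one gets $\operatorname{Re}\beta(z,x)^\alpha\ge c|z|^\alpha$ uniformly in $x$, hence $\operatorname{Re}\langle(\beta(z)^\alpha+A)u,u\rangle\ge c|z|^\alpha\|u\|^2$ for all $u$ in the domain of $A$, which yields $\|(\beta(z)^\alpha+A)^{-1}\|\le c^{-1}|z|^{-\alpha}$; or (ii) a perturbation of the constant-coefficient resolvent, writing $\beta(z)^\alpha+A=(z^\alpha+A)+(\beta(z)^\alpha-z^\alpha)$, noting $|\beta(z,x)^\alpha-z^\alpha|\le C|z|^{\alpha-1}$ uniformly in $x$ for $|z|\ge\kappa$, and summing the Neumann series using $\|(z^\alpha+A)^{-1}\|\le C|z|^{-\alpha}$ once $\kappa$ is large enough that $\|(z^\alpha+A)^{-1}(\beta(z)^\alpha-z^\alpha)\|\le C|z|^{-1}<1/2$. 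Either route completes the proof; as written, the key step of (3) is not justified.
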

	
    The regularity estimate on the solution of \eqref{2.1} has been derived in \cite{SunND:2021a}, the result is stated in the following theorem.
    \begin{theorem}[\cite{SunND:2021a}]\label{t2.1}
    Assume that $G(t)$ is the solution of \eqref{2.1} and $U(x)$ is bounded in $\bar{\Omega}$. If $G_0\in L^2(\Omega)$ and $\int_0^t(t-s)^{-q \alpha / 2}\|f(s)\|_{L^2(\Omega)} d s<\infty$, then we have
    $$
    \|G(t)\|_{\dot{H}^q(\Omega)} \leq C t^{-q \alpha / 2}\left\|G_0\right\|_{L^2(\Omega)}+C \int_0^t(t-s)^{-q \alpha / 2}\|f(s)\|_{L^2(\Omega)}ds, \ q \in[0,2],
    $$
    where $\|G\|_{\dot{H}^q(\Omega)}=\big(\sum_{j=1}^{\infty}\lambda_j^q(G,\varphi_j)^2\big)^{1/2}$ with $\{(\lambda_j, \varphi_j)\}_{j=1}^\infty$ being the eigenvalues and the normalized eigenfunctions of operator $A=-\Delta$ with a zero Dirichlet condition.
    \end{theorem}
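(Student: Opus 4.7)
The plan is to work directly from the Laplace-inverse representation \eqref{4.2}, splitting
$G(t) = G_{\mathrm{h}}(t) + G_{\mathrm{inh}}(t)$
into the $G_0$-part and the $\tilde f$-part, and to estimate the $\dot{H}^q(\Omega)$ norm of each through the contour $\Gamma_{\theta,\kappa}$. Since $\|v\|_{\dot{H}^q(\Omega)} = \|A^{q/2}v\|_{L^2(\Omega)}$ by the spectral definition, the whole task reduces to bounding the operator norm of $A^{q/2}(\beta(z)^\alpha+A)^{-1}\beta(z)^{\alpha-1}$ pointwise in $z\in\Gamma_{\theta,\kappa}$.

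The key intermediate step is the interpolated resolvent estimate
$$\|A^{q/2}(\beta(z)^\alpha+A)^{-1}\| \leq C|\beta(z)|^{-\alpha(1-q/2)}, \qquad z\in\Sigma_{\theta,\kappa},\ q\in[0,2],$$
which interpolates between the $q=0$ and $q=2$ bounds of Lemma \ref{l2.2}. I would prove it via the spectral decomposition $\{(\lambda_j,\varphi_j)\}$ of $A$, reducing to an estimate of $\lambda_j^{q/2}/|\beta(z)^\alpha+\lambda_j|$. Because $\theta$ is chosen close to $\pi/2$ and $\alpha<1$, $\arg(\beta(z)^\alpha)$ stays bounded away from $\pm\pi$, yielding $|\beta(z)^\alpha+\lambda_j|\geq c(|\beta(z)|^\alpha+\lambda_j)$; then the weighted AM--GM inequality $|\beta(z)|^\alpha+\lambda_j \geq |\beta(z)|^{\alpha(1-q/2)}\lambda_j^{q/2}$ closes the estimate. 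Combining with $|\beta(z)|\asymp|z|$ and $|\beta(z)^{\alpha-1}|\leq C|z|^{\alpha-1}$ from Lemma \ref{l2.2}, the integrand has operator norm bounded by $C|e^{zt}|\,|z|^{q\alpha/2-1}$ on $\Gamma_{\theta,\kappa}$.

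For the homogeneous piece, I would take $\kappa=\max\{\kappa_0,1/t\}$ (with $\kappa_0$ the threshold from Lemma \ref{l2.2}) and perform the standard Hankel contour estimate: on the circular arc $|z|=\kappa$, $|e^{zt}|$ is bounded and the arc has length $O(\kappa)$; on the two rays $\arg z=\pm\theta$, $|e^{zt}|=e^{rt\cos\theta}$ with $\cos\theta<0$ supplies exponential decay. After the change of variable $r\mapsto r/t$ one obtains
$$\int_{\Gamma_{\theta,\kappa}}|e^{zt}|\,|z|^{q\alpha/2-1}\,|dz| \leq Ct^{-q\alpha/2},$$
which produces the first term $Ct^{-q\alpha/2}\|G_0\|_{L^2(\Omega)}$.

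For the inhomogeneous piece I would read the Laplace inversion as a convolution $G_{\mathrm{inh}}(t)=\int_0^t E(t-s)f(s)\,ds$, where the operator-valued kernel $E(t)$ is the inverse Laplace transform of $(\beta(z)^\alpha+A)^{-1}\beta(z)^{\alpha-1}$. Since the integrand defining $E(t)$ is exactly the one already analyzed, the same contour argument gives $\|A^{q/2}E(t)\|\leq Ct^{-q\alpha/2}$, and Minkowski's inequality delivers the stated convolution term. The principal technical obstacle will be the interpolated resolvent bound in step two, since it hinges on the compatibility between $\theta$ and $\alpha$ ensuring $\arg(\beta(z)^\alpha)$ avoids the negative real axis uniformly on $\Gamma_{\theta,\kappa}$; once it is in hand, everything else is the classical Hankel-type machinery adapted from the Riemann--Liouville setting to the substantial setting through Lemma \ref{l2.2}.
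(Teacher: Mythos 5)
First, note that the paper itself gives no proof of Theorem \ref{t2.1}: it is quoted verbatim from \cite{SunND:2021a}, so there is no in-paper argument to compare against. On its merits, your overall strategy is the standard one and is essentially what the cited reference does: represent $G$ by the Laplace inversion formula \eqref{4.2}, establish a smoothing estimate $\|A^{q/2}\mathscr{E}(t)\|\le Ct^{-q\alpha/2}$ for the solution operator via resolvent bounds on $\Gamma_{\theta,\kappa}$ with $\kappa\sim 1/t$, and treat the source term as a convolution with this kernel. The contour computations you describe (arc of length $O(\kappa)$, exponential decay $e^{rt\cos\theta}$ on the rays, substitution $r\mapsto r/t$) are correct, and the Minkowski step for the inhomogeneous part is fine.

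There is, however, one genuine gap in your key intermediate step. You propose to prove $\|A^{q/2}(\beta(z)^\alpha+A)^{-1}\|\le C|\beta(z)|^{-\alpha(1-q/2)}$ by expanding in the eigenbasis $\{\varphi_j\}$ of $A$ and reducing to the scalar quantity $\lambda_j^{q/2}/|\beta(z)^\alpha+\lambda_j|$. This reduction tacitly assumes that $\beta(z)^\alpha$ acts as a scalar on each $\varphi_j$, but $\beta(z)=z+\rho U(x)$ is a multiplication operator by a nonconstant function of $x$ (in the paper's examples $U$ is a characteristic function), so $\beta(z)^\alpha$ does not commute with $A$ and $(\beta(z)^\alpha+A)^{-1}$ is not diagonal in the eigenbasis of $A$. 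The estimate you want is nevertheless true, but the correct route is to interpolate between the two endpoint bounds already supplied by Lemma \ref{l2.2}: for $w=(\beta(z)^\alpha+A)^{-1}v$ the moment inequality
\begin{equation*}
\|A^{q/2}w\|\le \|Aw\|^{q/2}\,\|w\|^{1-q/2},\qquad q\in[0,2],
\end{equation*}
(which follows from H\"older's inequality in the spectral representation of the self-adjoint operator $A$ applied to the fixed vector $w$) combined with $\|A(\beta(z)^\alpha+A)^{-1}\|\le C$ and $\|(\beta(z)^\alpha+A)^{-1}\|\le C|z|^{-\alpha}$ gives exactly $\|A^{q/2}(\beta(z)^\alpha+A)^{-1}\|\le C|z|^{-\alpha(1-q/2)}$. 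With this repair, the rest of your argument closes as written.
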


    In the following, we derive some lemmas, which play crucial roles in the analysis of the error estimates in Section \ref{Section4}.

	\begin{lemma}\label{l2.3}
	Let $a_\tau(\zeta )=(1-\zeta)(1+\frac{\alpha}{2}-\frac{\alpha }{2}\zeta )^{\frac{1}{\alpha}}/{\tau}$ and $\beta(z)$ be defined in \eqref{atauzeta} and \eqref{2.2}, respectively. Then we have
	$$
	C_1|z|\le |a_{\tau}(e^{-\beta(z) \tau })|\le C_2|z|,\ \forall~ z\in \Gamma_{\theta ,\kappa }^{\tau },
	$$
	where $\Gamma_{\theta,\kappa}^{\tau}$ is given by \eqref{eq:Gamtkt}, and $C_1, C_2$ denote two positive constants.
	\end{lemma}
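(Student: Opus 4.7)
The plan is to pull a factor of $\beta(z)$ out of $a_\tau(e^{-\beta(z)\tau})$ and reduce the estimate to showing that the two residual factors are continuous and bounded away from $0$ and $\infty$ on a $\tau$-independent compact set. First, I would set $w:=\beta(z)$ and $s:=w\tau$. Since $\Gamma_{\theta,\kappa}^\tau\subset\Sigma_{\theta,\kappa}$, Lemma~\ref{l2.2} delivers $C_1|z|\le|w|\le C_2|z|$, so it is enough to prove $|a_\tau(e^{-s})|\asymp|w|$. Using the definition of $a_\tau$,
\[
a_\tau(e^{-s})=\frac{1-e^{-s}}{s}\cdot\Bigl(1+\tfrac{\alpha}{2}-\tfrac{\alpha}{2}e^{-s}\Bigr)^{1/\alpha}\cdot\frac{s}{\tau}=h_1(s)\,h_2(s)\,w,
\]
with $h_1(s):=(1-e^{-s})/s$ (extended by $h_1(0)=1$) and $h_2(s):=\bigl(1+\tfrac{\alpha}{2}-\tfrac{\alpha}{2}e^{-s}\bigr)^{1/\alpha}$ taken on the principal branch so that $h_2(0)=1$.

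Next, I would locate the image region $\mathcal R_\tau:=\{\beta(z)\tau:z\in\Gamma_{\theta,\kappa}^\tau\}$. Writing $z=re^{\pm\mathbf i\theta}$ on the rays of \eqref{eq:Gamtkt} with $\kappa\le r\le\pi/(\tau\sin\theta)$ yields $|\operatorname{Im}(z\tau)|\le\pi$ and $|\operatorname{Re}(z\tau)|\le\pi|\cot\theta|$, while on the arc $|z|=\kappa$ one has $|z\tau|\le\kappa\tau=O(\tau)$. The perturbation by $\rho U(x)\tau$ is of size $O(\tau)$ by the boundedness of $U$, so for all sufficiently small $\tau$, $\mathcal R_\tau$ is trapped in the fixed compact set $\mathcal R:=\{s\in\mathbb C:|\operatorname{Im}s|\le\pi+1,\ \operatorname{Re}s\ge\pi\cot\theta-1\}$. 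On $\mathcal R$, $h_1$ is nowhere zero because $1-e^{-s}=0$ forces $s=2\pi\mathbf i k$ with $k\ne 0$, excluded by $|\operatorname{Im}s|<2\pi$; and $h_2$ is nowhere zero because $1+\tfrac{\alpha}{2}-\tfrac{\alpha}{2}e^{-s}=0$ would require $|e^{-s}|=1+2/\alpha>3$, i.e.\ $\operatorname{Re}s<-\log 3$, which is ruled out provided $\theta$ is chosen sufficiently close to $\pi/2$ (the same requirement already made in Lemma~\ref{l2.2}). Since the base of $h_2$ does not encircle the origin on $\mathcal R$, the principal branch is well-defined and continuous there, and both $h_1,h_2$ attain positive finite extrema on the compact set $\mathcal R$, uniformly in $\tau$.

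Combining these pieces gives $|a_\tau(e^{-s})|=|h_1(s)|\,|h_2(s)|\,|w|\asymp|w|\asymp|z|$, which is the desired two-sided bound. The main obstacle will be the uniform-in-$\tau$ confinement of $\mathcal R_\tau$ together with the non-vanishing of $h_1$ and $h_2$ on it; both issues hinge on the same fine tuning of $\theta$ close to $\pi/2$ and $\kappa$ sufficiently large that is already in force for Lemma~\ref{l2.2}, so no new restriction on these parameters is incurred.
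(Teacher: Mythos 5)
Your proof is correct and takes essentially the same route as the paper's: both use Lemma~\ref{l2.2} to reduce the claim to $|a_\tau(e^{-\omega\tau})|\asymp|\omega|$ with $\omega=\beta(z)$, and then establish two-sided bounds for a continuous function of $s=\omega\tau$ on a fixed compact region $|s|\le C\pi/\sin\theta$ independent of $\tau$, treating $s=0$ as a removable singularity. Your explicit factorization $a_\tau(e^{-s})=h_1(s)h_2(s)\omega$ together with the check that neither factor vanishes is, if anything, slightly more careful than the paper's two-case limit-plus-continuity argument (which tacitly needs $\gamma(e^{-\omega\tau})\neq 0$ for $\omega\tau\neq 0$); the only slip is that the set $\mathcal R$ as you wrote it bounds $\operatorname{Re}s$ only from below and is therefore not literally compact, but the bound $|s|\le C\pi/\sin\theta+O(\tau)$ you had already derived makes the image genuinely bounded and repairs this immediately.
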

	\begin{proof}
	Let $\omega:=\beta(z)$, then the first estimate in Lemma \ref{l2.2} shows that
    $$
    C_1|z|\tau\le\left| \omega \tau  \right|\le C_2\left| z \right|\tau \le \frac{C_2\pi }{\sin \theta },~\forall~ z\in \Gamma_{\theta ,\kappa }^{\tau },
    $$
    then it suffices to prove that $C_1|\omega |\le |a_{\tau}(e^{-\omega \tau})|\le C_2|\omega |$ holds for any $z\in \Gamma_{\theta ,\kappa }^{\tau }$.
	Let $\gamma(\zeta):=(1-\zeta)^\alpha(1+\frac{\alpha}{2}-\frac{\alpha}{2}\zeta)$, we first prove that $\frac{|\omega|}{| a_{\tau}(e^{-\omega \tau})|}=\frac{| \omega \tau  |}{| \gamma(e^{-\omega \tau})^{\frac{1}{\alpha}} |}$ is bounded for any $z\in \Gamma_{\theta ,\kappa }^{\tau }$ by the similar approach in \cite{WangYYP:2020}.
	
	(i) For the case $0\le | \omega \tau |\le \delta_0$ with some $0<\delta_0\le \frac{C_2\pi }{\sin \theta }$, we have that
	$$
	\lim\limits_{x\rightarrow 0}\frac{x}{\gamma(e^{-x})^{\frac{1}{\alpha}}}=\lim\limits_{x\rightarrow 0}\frac{x}{\left(x^\alpha+d_1 x^{2+\alpha}+d_2 x^{3+\alpha}+\cdots \right)^{\frac{1}{\alpha}}}\\
	=\lim\limits_{x\rightarrow 0}\frac{1}{\left(1+d_1x^2+\cdots\right)^{\frac{1}{\alpha}}}=1,
	$$
	which implies the boundedness of $\frac{| \omega \tau |}{| \gamma(e^{-\omega \tau})^{\frac{1}{\alpha}} |}$ when $0\le | \omega \tau |\le \delta_0$ and $z\in \Gamma_{\theta ,\kappa }^{\tau }$.
	
	(ii) For the case $\delta_0\le |\omega \tau|\le \frac{C_2\pi }{\sin \theta }$, the continuity of $\frac{| \omega \tau  |}{| \gamma(e^{-\omega \tau})^{\frac{1}{\alpha}} |}$ at any $\omega \tau \ne 0$ and $z\in \Gamma_{\theta ,\kappa }^{\tau }$ leads to the boundedness of $\frac{| \omega \tau  |}{| \gamma(e^{-\omega \tau})^{\frac{1}{\alpha}} |}$ when $\delta_0\le |\omega \tau|\le \frac{C\pi }{\sin \theta }, z\in \Gamma_{\theta ,\kappa }^{\tau }$.
	
	Combing (i) and (ii), we obtain that $\frac{|\omega|}{|a_{\tau}(e^{-\omega \tau}) |}=\frac{|\omega \tau|}{| \gamma{{(e^{-\omega \tau })}^{\frac{1}{\alpha }}} |}$ is bounded for all $z\in \Gamma_{\theta ,\kappa }^{\tau }$, which implies $C_1|\omega |\le |a_{\tau}(e^{-\omega \tau })|$. Similarly, we can derive that $\frac{\left|a_{\tau}(e^{-\omega \tau}) \right|}{\left|\omega\right|}$ is also bounded for all $z\in \Gamma_{\theta ,\kappa }^{\tau }$, which leads to $|a_{\tau}(e^{-\omega \tau })|\le C_2|\omega |$. This completes the proof.
	\end{proof}
	
	\begin{lemma}\label{l2.4}
	Let $a_\tau(\zeta )=(1-\zeta)(1+\frac{\alpha}{2}-\frac{\alpha }{2}\zeta )^{\frac{1}{\alpha}}/{\tau}$ be defined in \eqref{atauzeta}, $\zeta =e^{\mathbf{i}\theta }$ and $\theta \in [0,2\pi]$. Then we have
	$$
	a_{\tau}(e^{\mathbf{i}\theta})^{\alpha}\in \Sigma_{\frac{\alpha \pi }{2}},\ \alpha \in (0,1).
	$$
	\end{lemma}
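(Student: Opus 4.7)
The plan is first to raise everything to the $\alpha$-th power in order to remove the fractional exponent from the definition of $a_\tau$:
\[
\tau^{\alpha}\,a_\tau(\zeta)^{\alpha}=(1-\zeta)^{\alpha}\Bigl[1+\tfrac{\alpha}{2}(1-\zeta)\Bigr].
\]
With $\zeta=e^{\mathbf{i}\theta}$, the polar factorisation $1-e^{\mathbf{i}\theta}=2\sin(\theta/2)\,e^{\mathbf{i}(\theta/2-\pi/2)}$ immediately gives the first factor an argument of exactly $\alpha(\theta/2-\pi/2)$, while a short computation shows that the bracket has real part $1+\alpha\sin^{2}(\theta/2)\ge 1$, so it sits in the open right half-plane and contributes an argument in $(-\pi/2,\pi/2)$.

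Since $a_\tau(e^{-\mathbf{i}\theta})^{\alpha}=\overline{a_\tau(e^{\mathbf{i}\theta})^{\alpha}}$ and $\Sigma_{\alpha\pi/2}$ is invariant under conjugation, I will restrict to $\theta\in[0,\pi]$, on which both factors have non-positive imaginary part. The upper bound $\arg\bigl(a_\tau(e^{\mathbf{i}\theta})^{\alpha}\bigr)\le 0$ is then immediate, and the entire content of the lemma is the lower bound, which is equivalent to
\[
\arg\bigl[1+\tfrac{\alpha}{2}(1-e^{\mathbf{i}\theta})\bigr]\ge -\tfrac{\alpha\theta}{2}.
\]
Writing the arctangent explicitly, multiplying through by $\cos(\alpha s)>0$ and setting $s=\theta/2\in[0,\pi/2]$, this reduces to the purely real scalar inequality $P(s)\ge 0$ on $[0,\pi/2]$, where
\[
P(s):=\sin(\alpha s)\bigl(1+\alpha\sin^{2}s\bigr)-\alpha\sin s\cos s\cos(\alpha s).
\]

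The main obstacle is establishing this scalar inequality cleanly. Since $P(0)=0$, I plan to show $P$ is non-decreasing on $[0,\pi/2]$. Differentiating and then simplifying via $1-\cos(2s)=2\sin^{2}s$ (to turn the coefficient of $\cos(\alpha s)$ into $(2+\alpha)\sin^{2}s$) together with the sine addition formula $\sin s\cos(\alpha s)+\cos s\sin(\alpha s)=\sin((1+\alpha)s)$, I expect every cross-term to telescope into the clean factorisation
\[
P'(s)=\alpha(2+\alpha)\,\sin s\,\sin\bigl((1+\alpha)s\bigr).
\]
Because $\alpha\in(0,1)$, the arguments $s$ and $(1+\alpha)s$ both lie in $[0,\pi]$ when $s\in[0,\pi/2]$, so each sine is non-negative and $P'\ge 0$; combined with $P(0)=0$ this yields $P\ge 0$. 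Together with the conjugation-symmetry reduction, this establishes $a_\tau(e^{\mathbf{i}\theta})^{\alpha}\in\Sigma_{\alpha\pi/2}$. Without the telescoping identity for $P'$ the inequality is awkward to analyse uniformly in $\alpha$, so spotting that factorisation is really the crux of the proof.
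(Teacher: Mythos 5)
Your proposal is correct, and I verified the key identity: writing $P(s)=(1+\tfrac{\alpha}{2})\sin(\alpha s)-\tfrac{\alpha}{2}\sin((2+\alpha)s)$ via the product-to-sum formulas gives $P'(s)=\tfrac{\alpha(2+\alpha)}{2}\bigl[\cos(\alpha s)-\cos((2+\alpha)s)\bigr]=\alpha(2+\alpha)\sin s\,\sin((1+\alpha)s)$, exactly as you claim, and the reduction from the argument bound to $P(s)\ge 0$ (take tangents of two quantities in $[0,\pi/2)$, clear the positive denominators) is legitimate. The paper takes a related but genuinely different route: it keeps the two factors together, writes the full argument as $\varphi(\theta)=\alpha(\theta/2-\pi/2)+\arctan\bigl(\tfrac{-\frac{\alpha}{2}\sin\theta}{1+\frac{\alpha}{2}-\frac{\alpha}{2}\cos\theta}\bigr)$, and shows by direct differentiation that $\varphi'(\theta)=\tfrac{\frac{\alpha}{2}(1+\frac{\alpha}{2})(1+\alpha)(1-\cos\theta)}{(1+\frac{\alpha}{2}-\frac{\alpha}{2}\cos\theta)^2+(\frac{\alpha}{2}\sin\theta)^2}\ge 0$, so $\varphi$ increases monotonically from $\varphi(0)=-\alpha\pi/2$ to $\varphi(2\pi)=\alpha\pi/2$; one computation delivers both bounds on all of $[0,2\pi]$ at once. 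Your version instead handles the factor $(1-\zeta)^{\alpha}$ exactly and bounds the perturbation from the bracket by a scalar trigonometric inequality; note that your conjugation reduction to $\theta\in[0,\pi]$ is not merely cosmetic but essential, since $\sin((1+\alpha)s)\ge 0$ requires $s\le\pi/(1+\alpha)$, which holds on $[0,\pi/2]$ but would fail for $s$ near $\pi$ if you worked on the full circle. What your approach buys is a cleaner, fully factored derivative and the upper bound for free from sign considerations; what the paper's buys is uniformity over the whole range without any symmetry reduction. Both are elementary and of comparable length.
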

	\begin{proof}
	From the definition of $a_{\tau}(\cdot)$ and $\theta \in (0,2\pi )$, it obtains the argument of $a_{\tau}(e^{\mathbf{i}\theta})^{\alpha}$ as follows
	$$
	\begin{aligned}
	\arg a_\tau(e^{\mathbf{i}\theta})^{\alpha}& =\arg(1-e^{\mathbf{i}\theta})^{\alpha}+\arg\big(1+\frac{\alpha}{2}-\frac{\alpha}{2}{e}^{\mathbf{i}\theta}\big) \\
	& =\alpha \arg (1-\cos \theta -\mathbf{i}\sin \theta )+\arg\big(1+\frac{\alpha }{2}-\frac{\alpha }{2}\cos \theta -\mathbf{i}\frac{\alpha }{2}\sin \theta\big) \\
	& =\alpha \arctan\Big(\frac{-\sin \theta }{1-\cos \theta }\Big)+\arctan\Big(\frac{-\frac{\alpha }{2}\sin \theta }{1+\frac{\alpha }{2}-\frac{\alpha }{2}\cos \theta }\Big) \\
	& =\alpha\Big(\frac{\theta }{2}-\frac{\pi }{2}\Big)+\arctan\Big(\frac{-\frac{\alpha }{2}\sin \theta }{1+\frac{\alpha }{2}-\frac{\alpha }{2}\cos \theta }\Big):=\varphi (\theta ).
	\end{aligned}
	$$
	The derivative of $\varphi (\theta )$ satisfies
	$$
	\begin{aligned}
	  \varphi '(\theta )&=\frac{\alpha }{2}+\frac{1}{1+{{\Big(\frac{-\frac{\alpha }{2}\sin \theta }{1+\frac{\alpha }{2}-\frac{\alpha }{2}\cos \theta }\Big)}^{2}}}\cdot\frac{-\frac{\alpha }{2}\cos \theta (1+\frac{\alpha }{2}-\frac{\alpha }{2}\cos \theta )-(-\frac{\alpha }{2}\sin \theta )\frac{\alpha }{2}\sin \theta }{{{(1+\frac{\alpha }{2}-\frac{\alpha }{2}\cos \theta )}^{2}}} \\
	  & =\frac{\alpha }{2}+\frac{-\frac{\alpha }{2}(1+\frac{\alpha }{2})\cos \theta+\frac{{{\alpha }^{2}}}{4}}{{{(1+\frac{\alpha }{2}-\frac{\alpha }{2}\cos \theta )}^{2}}+{{(-\frac{\alpha }{2}\sin \theta )}^{2}}} \\
	  & =\frac{\frac{\alpha }{2}(1+\frac{\alpha }{2})(1+\alpha )(1-\cos \theta )}{\big(1+\frac{\alpha}{2}-\frac{\alpha }{2}\cos\theta\big)^2+\big(-\frac{\alpha }{2}\sin \theta\big)^2}\ge0. \\
	\end{aligned}
	$$
    This implies that $\varphi (\theta )$ is monotonically increasing for $\theta\in[0,2\pi]$, then $-\frac{\alpha \pi }{2}=\varphi (0)\le\varphi(\theta)\le \varphi (2\pi)=\frac{\alpha \pi }{2}$. Thus we have $a_{\tau}(e^{\mathbf{i}\theta})^{\alpha}\in \Sigma_{\frac{\alpha \pi }{2}}.$ 
	\end{proof}
	
	\begin{lemma}\label{l2.5}
	Let $a_\tau(\zeta )=(1-\zeta)(1+\frac{\alpha}{2}-\frac{\alpha }{2}\zeta )^{\frac{1}{\alpha}}/\tau$ be defined in \eqref{atauzeta}, $\beta(z)$ be defined in \eqref{2.2} and $\alpha \in (0,1)$. We have that
	$$
	a_{\tau}(e^{-\beta(z) \tau})^{\alpha}\in \Sigma_{\phi}, ~\forall~z\in \Gamma_{\theta ,\kappa }^{\tau },
	$$
	for some $\phi \in (\alpha \pi /2,\pi )$, where $\tau\le\tau_*$ with $\tau_*$ being a positive constant. Moreover, the operator $\big(a_{\tau}(e^{-\beta(z) \tau})^{\alpha}+A\big)^{-1}$ is analytic with respect to $z$ in the region $\Sigma_{\theta ,\kappa }^{\tau }$ and satisfies
	$$
	\big\|\big(a_{\tau}(e^{-\beta(z) \tau})^{\alpha}+A \big)^{-1} \big\|\le C|a_{\tau}(e^{-\beta(z) \tau})|^{-\alpha}\le C|z|^{-\alpha},~\forall~z\in \Gamma_{\theta ,\kappa }^{\tau }.
	$$
	\end{lemma}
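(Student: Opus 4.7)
The proof splits naturally into two parts: (i) establishing the sector inclusion $a_\tau(e^{-\beta(z)\tau})^\alpha \in \Sigma_\phi$ with $\phi \in (\alpha\pi/2,\pi)$, and (ii) deducing the resolvent bound and analyticity. The strategy for (i) is to factor out the continuous symbol $\beta(z)^\alpha$ and treat the discrete symbol as a perturbation of it. Introduce
$$a_\tau(e^{-\beta(z)\tau})^\alpha = \beta(z)^\alpha\, F(\beta(z)\tau), \qquad F(s):=\Big(\tfrac{1-e^{-s}}{s}\Big)^{\alpha}\Big(1+\tfrac{\alpha}{2}-\tfrac{\alpha}{2}e^{-s}\Big),$$
so that $F$ is analytic near $s=0$ with $F(0)=1$. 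By Lemma \ref{l2.3}, the set $\{\beta(z)\tau:z\in\Gamma_{\theta,\kappa}^\tau\}$ is contained in a fixed bounded region independent of $\tau$, and the same lemma's lower bound forces $F(\beta(z)\tau)\neq 0$ on this set (otherwise $a_\tau(e^{-\beta(z)\tau})=0$, contradicting $C_1|z|\le |a_\tau(e^{-\beta(z)\tau})|$). Continuity together with compactness then delivers a uniform bound $|\arg F(\beta(z)\tau)|\le \phi_0$ with $\phi_0$ depending only on $\alpha$ and $\theta$, and the freedom to shrink $\tau_*$ keeps the range of $\beta(z)\tau$ close to the origin where $F$ is near $1$, so $\phi_0$ can be made as small as needed.

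For the leading symbol, write $\beta(z)=z+\rho U(x)$ and use that $|z|\ge\kappa$ on $\Gamma_{\theta,\kappa}^\tau$; choosing $\kappa$ large compared to $|\rho|\|U\|_{L^\infty(\bar\Omega)}$ gives $|\arg\beta(z)-\arg z|\le\epsilon_1$ uniformly, hence $|\arg\beta(z)^\alpha|\le \alpha(\theta+\epsilon_1)$. Adding the two contributions yields
$$|\arg a_\tau(e^{-\beta(z)\tau})^\alpha|\le \alpha(\theta+\epsilon_1)+\phi_0=:\phi,$$
which stays strictly below $\pi$ for $\theta$ close enough to $\pi/2$ and $\tau\le\tau_*$ small. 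The bound $\phi>\alpha\pi/2$ is automatic since $\theta>\pi/2$.

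For (ii), once the sector inclusion is in hand, one invokes the standard sectorial-operator resolvent estimate: since $A=-\Delta$ with zero Dirichlet data is positive self-adjoint with spectrum in $[\lambda_1,\infty)$, every $\mu\in\Sigma_\phi$ with $\phi<\pi$ lies in the resolvent set of $-A$ with $\|(\mu+A)^{-1}\|\le C/|\mu|$. Taking $\mu=a_\tau(e^{-\beta(z)\tau})^\alpha$ and combining with the lower bound $|a_\tau(e^{-\beta(z)\tau})|\ge C_1|z|$ from Lemma \ref{l2.3} produces the claimed estimate. Analyticity of the operator-valued map in $z$ follows from the analyticity of $z\mapsto \beta(z)$ (entire), the analyticity of $\zeta\mapsto a_\tau(\zeta)^\alpha$ on the compact image set where the function is nonzero (so a single-valued branch of the $\alpha$-th power can be chosen), and the analyticity of the resolvent map $\mu\mapsto(\mu+A)^{-1}$ on the resolvent set.

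The main obstacle is the uniform control of $\arg F(\beta(z)\tau)$: near $s=0$ this is straightforward from $F(0)=1$, but $\beta(z)\tau$ can reach magnitudes of order $\pi/\sin\theta$ on the straight portion of $\Gamma_{\theta,\kappa}^\tau$, where $F$ could in principle have a large argument. Showing that the three small-parameter budgets (take $\theta$ close to $\pi/2$, $\kappa$ large, $\tau_*$ small) combine to yield $\phi<\pi$ while still accommodating $\alpha\theta$ and $\phi_0$ is the delicate quantitative step; everything else is a routine application of sectorial-operator theory together with Lemmas \ref{l2.2} and \ref{l2.3}.
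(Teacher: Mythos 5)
Your part (ii) --- the resolvent bound and the analyticity claim --- is fine and is essentially what the paper does (it simply invokes Lemmas \ref{l2.2} and \ref{l2.3} together with the sectorial resolvent estimate for the self-adjoint operator $A$). The problem is in part (i), exactly at the point you yourself flag as ``the delicate quantitative step'': you never close it, and the mechanism you offer for closing it is false. You claim that ``the freedom to shrink $\tau_*$ keeps the range of $\beta(z)\tau$ close to the origin where $F$ is near $1$, so $\phi_0$ can be made as small as needed.'' This cannot work: the contour $\Gamma_{\theta,\kappa}^\tau$ in \eqref{eq:Gamtkt} extends out to $|z|=\pi/(\tau\sin\theta)$, so shrinking $\tau$ lengthens the contour in exactly the proportion needed to keep the range of $\beta(z)\tau$ fixed, and $|\beta(z)\tau|$ always reaches magnitudes of order $\pi/\sin\theta$. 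Moreover $\arg F$ is genuinely not small there: near the endpoint of the contour with $\theta$ close to $\pi/2$ one has $\beta(z)\tau\approx \mathbf{i}\pi$, hence $e^{-\beta(z)\tau}\approx -1$ and $a_\tau(e^{-\beta(z)\tau})^\alpha\approx 2^\alpha(1+\alpha)\tau^{-\alpha}$ is essentially real positive, while $\beta(z)^\alpha$ has argument close to $\alpha\pi/2$; thus $\arg F(\beta(z)\tau)\approx -\alpha\pi/2$. A bare continuity-plus-compactness argument yields only \emph{some} finite $\phi_0$ with no control guaranteeing $\alpha(\theta+\epsilon_1)+\phi_0<\pi$, so the factorization $a_\tau(e^{-\beta(z)\tau})^\alpha=\beta(z)^\alpha F(\beta(z)\tau)$ does not by itself deliver the sector inclusion. (A quantitative bound of the form $|\arg F|\le \alpha\pi/2+\epsilon$ on the whole range would suffice, but you do not establish one.)

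The paper closes this step by a different comparison, which is worth internalizing. Lemma \ref{l2.4} first proves the sharp statement that the generating function evaluated \emph{on the unit circle}, $a_\tau(e^{-\mathbf{i}s})^\alpha$, lies in the optimal sector $\Sigma_{\alpha\pi/2}$, via an explicit monotonicity computation of its argument. Then, writing $e^{-\beta(z)\tau}=e^{-s\cot\varphi}e^{-\mathbf{i}s}$ with $s=\tau|\operatorname{Im}\beta(z)|$, a mean-value estimate on $\beta_\tau(\zeta)=a_\tau(\zeta)^\alpha$ gives $|\beta_\tau(e^{-\beta(z)\tau})-\beta_\tau(e^{-\mathbf{i}s})|\le C\,|\varphi-\pi/2|\,|\beta_\tau(e^{-\mathbf{i}s})|$, i.e.\ the off-circle value is a \emph{relative} perturbation of size $O(|\theta-\pi/2|)$ of a point already known to lie in $\Sigma_{\alpha\pi/2}$; choosing $\theta$ sufficiently close to $\pi/2$ then places everything in $\Sigma_\phi$ for some $\phi\in(\alpha\pi/2,\pi)$. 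If you wish to keep your factorization, you would need an analogous uniform quantitative bound on $\arg F$ over the full, non-shrinking range of $\beta(z)\tau$, which in effect amounts to redoing the computation of Lemma \ref{l2.4}.
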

	\begin{proof}
	Let $\omega:=\beta(z)$. Firstly, for sufficiently small step $\tau<\frac{\pi}{2|\rho|\|U\|_{L^\infty(\bar{\Omega})}}$, we have
	$$
	0\le\tau|\mathrm{Im}(\omega)|\le \tau(|\mathrm{Im}(z)|+|\rho|\|U\|_{L^\infty(\bar{\Omega})})\leq\pi+\tau(|\rho|\|U\|_{L^\infty(\bar{\Omega})})<\frac{3}{2}\pi, ~\forall~z\in \Gamma_{\theta ,\kappa }^{\tau }.
	$$
	Setting $s:=\tau|\operatorname{Im}(\omega )|=| \omega |\tau\sin \varphi$, then $s\in [0,\frac{3\pi }{2})$ and $e^{-\omega \tau}=e^{-\tau \left| \omega  \right|(\cos \varphi +\mathbf{i}\sin \varphi )}=e^{-s\cot \varphi }e^{-\mathbf{i}s}$. This and $\beta_{\tau}(\zeta ):=a_{\tau}(\zeta )^{\alpha }$ imply that
	$$
	\big|a_{\tau}(e^{-\omega \tau})^{\alpha }-a_{\tau}(e^{-\mathbf{i}s})^{\alpha} \big|=\big|\beta_{\tau}(e^{-\omega\tau})-\beta_{\tau}(e^{-\mathbf{i}s}) \big|\le Cs| \cot \varphi |\cdot| \beta_{\tau }^{\prime }(e^{-\sigma (s)s\cot (\varphi )}e^{-\mathbf{i}s}) |,
	$$
	where $\sigma(s)\in (0,1)$.
	Further, it obtains
	$$
	\beta_{\tau}^{\prime}(\zeta )=\frac{-\frac{\alpha }{2}{(1-\zeta )^{\alpha }}-\alpha(1+\frac{\alpha }{2}-\frac{\alpha }{2}\zeta ){(1-\zeta )^{\alpha -1}}}{\tau^{\alpha }}.
	$$
	Let $\varphi$ be sufficiently close to $\pi/2$, thus $e^{-\sigma(s)s\cot\varphi}\approx 1$, which leads to
	$$
	|\beta_{\tau}^{\prime}(e^{-\sigma(s)s\cot\varphi}e^{-\mathbf{i}s})|\le  C\tau^{-\alpha}|1-e^{-\sigma(s)s\cot\varphi}e^{-\mathbf{i}s}|^{\alpha-1}.
	$$
	Combing the preceding two estimates with the inequality $|\cot\varphi|\le C|\varphi-\pi/2|$ in \cite{JinLZ:2018a} follows that
    $$
    |\beta_{\tau}(e^{-\omega\tau})-\beta_{\tau}(e^{-\mathbf{i}s})|\le C\tau^{-\alpha}|\varphi-\pi/2|s|1-e^{-\sigma(s)s\cot\varphi}e^{-\mathbf{i}s}|^{\alpha-1}.
    $$
    Recall that
    $$
    \beta_\tau(e^{-\mathbf{i}s})=\frac{2^\alpha(\sin\frac{s}{2})^\alpha\rho(s)}{\tau^{\alpha}}e^{\mathbf{i}(\frac{\alpha\pi}{2}-\frac{\alpha s}{2}+\Psi(s))},
    $$
    where
    $$
    \rho(s)=\sqrt{\big(1+\frac{\alpha}{2}\big)^2+\frac{\alpha^2}{4}-\alpha\big(1+\frac{\alpha}{2}\big)\cos s},~~\Psi(s)=\arctan\frac{\frac{\alpha}{2}\sin s}{1+\frac{\alpha}{2}-\frac{\alpha}{2}\cos s}.
    $$
    If $s\in(0,3\pi/2)$ is small, then Taylor expansion yields $\beta_\tau(e^{-\mathbf{i}s})\approx \tau^{-\alpha}s^\alpha e^{\mathbf{i}\alpha\pi/2}$ and $1-e^{-\sigma(s)s\cot\varphi}e^{-\mathbf{i}s}\approx \sigma(s)s\cot\varphi+\mathbf{i}s$ asymptotically. Immediately, we have
    $$
    |\beta_{\tau}(e^{-\omega\tau})-\beta_{\tau}(e^{-\mathbf{i}s})|\le C\tau^{-\alpha}|\varphi-\pi/2|s^\alpha\le C|\theta-\pi/2||\beta_\tau(e^{-\mathbf{i}s})|.
    $$
    If $s\in(0,3\pi/2)$ is away from $0$, then $|\beta_\tau(e^{-\mathbf{i}s})|\ge C\tau^{-\alpha}$, hence
    $$
    |\beta_{\tau}(e^{-\omega\tau})-\beta_{\tau}(e^{-\mathbf{i}s})|\le C\tau^{-\alpha}|\varphi-\pi/2|\le C|\varphi-\pi/2||\beta_\tau(e^{-\mathbf{i}s})|.
    $$
	By choosing $\varphi $ sufficiently close to $\frac{\pi }{2}$ and using Lemma \ref{l2.4}, we obtain $a_{\tau}(e^{-\omega \tau})^{\alpha}\in {{\Sigma }_{\phi }}$ for some $\phi\in(\alpha\pi/2,\pi)$. The resolvent estimate follows immediately from Lemmas \ref{l2.2} and \ref{l2.3}.
	\end{proof}
	
	\begin{lemma}\label{l2.6}
	Let
	\begin{equation}\label{muzeta}
	\mu (\zeta ):=\big(1+\frac{\alpha }{2}-\frac{\alpha }{2}\zeta \big)^{\frac{1}{\alpha}}(1-\zeta )\Big(\frac{1}{2}\zeta +\frac{\zeta }{1-\zeta }\Big),
	\end{equation}
	and $\beta(z)$ be defined in \eqref{2.2}.
	Then we have
	$$
	\big| \mu (e^{-\beta(z)\tau})-1 \big|\le C\left| z \right|^2\tau^2,\ z\in \Gamma_{\theta ,\kappa }^{\tau }.
	$$
	\end{lemma}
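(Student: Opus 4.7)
The plan is to reduce the statement to showing that the function $h(x) := \mu(e^{-x}) - 1$ vanishes to second order at $x = 0$ on a fixed disk, and then substitute $x = \beta(z)\tau$. Setting $\omega := \beta(z)$, Lemma \ref{l2.2} gives $|\omega| \le C_2|z|$, and the definition of $\Gamma_{\theta,\kappa}^\tau$ forces $|z|\tau \le \pi/\sin\theta$, so $|x| = |\omega\tau|$ is bounded by a constant $R$ independent of $\tau$ provided $\tau$ is small enough. On the disk $|x| \le R$ one checks that the argument $1 + \tfrac{\alpha}{2}(1-e^{-x})$ stays in a compact set bounded away from $0$ and from the negative real axis, so the principal branch of $(\cdot)^{1/\alpha}$ is analytic there, and hence so is $h$.

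The next step is the algebraic simplification
\begin{equation*}
(1-\zeta)\Bigl(\tfrac{1}{2}\zeta + \tfrac{\zeta}{1-\zeta}\Bigr) = \tfrac{1}{2}\zeta(1-\zeta) + \zeta = \tfrac{\zeta(3-\zeta)}{2},
\end{equation*}
so that $\mu(\zeta) = \bigl(1 + \tfrac{\alpha}{2}(1-\zeta)\bigr)^{1/\alpha}\cdot \tfrac{\zeta(3-\zeta)}{2}$; this removes the apparent pole at $\zeta = 1$ and makes the analysis near $\zeta = 1$ straightforward. Evaluating at $\zeta = 1$ yields $\mu(1) = 1$, hence $h(0) = 0$. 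For the first derivative I would use logarithmic differentiation:
\begin{equation*}
\frac{d}{dx}\log\mu(e^{-x}) = \frac{\tfrac{1}{2}e^{-x}}{1+\tfrac{\alpha}{2}(1-e^{-x})} \;-\; 1 \;+\; \frac{e^{-x}}{3-e^{-x}},
\end{equation*}
which at $x = 0$ equals $\tfrac{1}{2} - 1 + \tfrac{1}{2} = 0$; since $\mu(1) \ne 0$, this forces $h'(0) = 0$.

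With $h$ analytic on a fixed $\tau$-independent neighborhood of $0$ containing all relevant $x$, the second derivative $h''$ is uniformly bounded there, and Taylor's theorem in the integral form $h(x) = \int_0^x (x-s)\,h''(s)\,ds$ gives $|h(x)| \le C|x|^2$. Substituting $x = \beta(z)\tau$ and applying $|\beta(z)| \le C|z|$ from Lemma \ref{l2.2} yields the claim $|\mu(e^{-\beta(z)\tau}) - 1| \le C|\beta(z)\tau|^2 \le C|z|^2\tau^2$. The only delicate point is the identity $h'(0) = 0$, which is the analytic fact encoding the second-order consistency of the weighted-and-shifted Gr\"unwald formula; the remaining steps are routine analytic continuation, Taylor estimation, and resolvent bookkeeping already provided by Lemma \ref{l2.2}.
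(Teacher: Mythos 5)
Your proof is correct and follows essentially the same route as the paper: both arguments reduce to showing that $\mu(e^{-x})-1$ vanishes to second order at $x=0$ (the paper by multiplying the power-series expansions of the two factors in $1-\zeta$, you by the cleaner simplification $\mu(\zeta)=\bigl(1+\tfrac{\alpha}{2}(1-\zeta)\bigr)^{1/\alpha}\,\zeta(3-\zeta)/2$ together with logarithmic differentiation), and then handle the regime $|\beta(z)\tau|\ge\delta_0$ by uniform boundedness. The one point to state more carefully is your claim of analyticity of $h$ on the whole disk $|x|\le R$: the branch point of $\bigl(1+\tfrac{\alpha}{2}(1-\zeta)\bigr)^{1/\alpha}$ at $\zeta=1+2/\alpha$ can be reached for $x$ real and sufficiently negative, so the correct statement restricts to the actual range $x=\beta(z)\tau$, $z\in\Gamma_{\theta,\kappa}^\tau$, where $\operatorname{Re}x\ge -\pi|\cot\theta|-\tau|\rho|\,\|U\|_{L^\infty(\bar{\Omega})}$ keeps $|e^{-x}|$ near $1$ and away from the branch point; the paper's own proof (``$\mu(\zeta)-1$ is continuous at any $\zeta\ne 1$'') glosses over the same issue.
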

	\begin{proof}
    Let $\omega:=\beta(z)$.
	By Lemma \ref{l2.2}, it yields that
	$0<\left| \omega \tau  \right|\le C\left| z \right|\tau \le C\frac{\pi }{\sin \theta }$ for $z\in \Gamma_{\theta ,\kappa }^{\tau }$.
	Let $\gamma(\zeta)=(1-\zeta)^\alpha(1+\frac{\alpha}{2}-\frac{\alpha}{2}\zeta)$, then we have
	$$
	\begin{aligned}
	\mu (\zeta )-1 & =\gamma(\zeta)^{1/\alpha}\Big( \frac{\zeta }{1-\zeta }+\frac{1}{2}\zeta \Big)-1  \\
	{} & =\Big( 1+\frac{1}{2}(1-\zeta )+\frac{1-\alpha }{8}{{(1-\zeta )}^{2}}+\cdots \Big)\Big( 1-\frac{1}{2}(1-\zeta )-\frac{1}{2}{(1-\zeta )^2} \Big)-1  \\
	{} & =O\big( (1-\zeta)^2 \big)\text{ as }\zeta \to 1,  \\
	\end{aligned}
	$$
	which implies that
	$\mu ({{e}^{-\omega \tau }})-1=O({{(1-{{e}^{-\omega \tau }})}^{2}})=O({{(\omega \tau )}^{2}})$ with $\omega \tau \to 0$.
	Then, there exists ${{\delta }_{0}}>0$ with $0<{{\delta }_{0}}\le \frac{C\pi }{\sin \theta }$ such that $\left| \mu ({{e}^{-\omega \tau }})-1 \right|\le C{{\left| \omega \tau  \right|}^{2}}\le C{{\left| z\tau  \right|}^{2}}$ holds for $0\le \left| \omega \tau  \right|\le {{\delta }_{0}}$ by using Lemma \ref{l2.2}.
	
	For the case ${{\delta }_{0}}\le \left| \omega \tau  \right|\le \frac{C\pi }{\sin \theta }$,
	the function $\mu (\zeta )-1$ is continuous at any $\zeta \ne 1$, which implies that $\mu ({{e}^{-\omega \tau }})-1$ is continuous at any $\omega \tau\ne 0$.
	Hence,
	$\mu ({{e}^{-\omega \tau }})-1$ is bounded for ${{\delta }_{0}}\le \left| \omega \tau  \right|\le \frac{C\pi }{\sin \theta }$, $z\in \Gamma_{\theta ,\kappa }^{\tau }$, and
	$$
	\left| \mu ({{e}^{-\omega \tau }})-1 \right|\le C\text{=}C\delta_{0}^{-2}\delta_{0}^{2}\le C\delta_{0}^{-2}{{\left| \omega  \right|}^{2}}{{\tau }^{2}}\le C{{\left| z \right|}^{2}}{{\tau }^{2}}.
	$$
    Therefore, we obtain
	$$
	\left| \mu ({{e}^{-\omega \tau }})-1 \right|\le C{{\left| z \right|}^{2}}{{\tau }^{2}},~\forall~z\in \Gamma_{\theta ,\kappa }^{\tau }.
	$$
	\end{proof}
	
	\begin{lemma}\label{l2.7}
	Let $a_\tau(\zeta )=(1-\zeta)(1+\frac{\alpha}{2}-\frac{\alpha }{2}\zeta )^{\frac{1}{\alpha}}/\tau$ be defined in \eqref{atauzeta}, and $\beta(z)$ be defined in \eqref{2.2}. Then for real number $\sigma$, we have
	$$
	\big |a_\tau(e^{-\beta(z) \tau })^\sigma-\beta(z)^\sigma \big |\le C\tau^2|\beta(z)|^{2+\sigma},~~\forall~ z\in \Gamma_{\theta,\kappa}^\tau.
	$$
	\end{lemma}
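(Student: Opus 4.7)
The plan is to reduce this second-order consistency estimate to a single Taylor expansion of the generating function about the origin, exactly in the spirit of Lemmas \ref{l2.3} and \ref{l2.6}. Setting $\omega:=\beta(z)$, Lemma \ref{l2.2} and the definition of $\Gamma_{\theta,\kappa}^\tau$ give $|\omega\tau|\le C\pi/\sin\theta$ for all $z\in\Gamma_{\theta,\kappa}^\tau$, so the argument $y:=\omega\tau$ lies in a bounded region. I would introduce the auxiliary function
\begin{equation*}
\phi(y):=\frac{(1-e^{-y})\big(1+\tfrac{\alpha}{2}-\tfrac{\alpha}{2}e^{-y}\big)^{1/\alpha}}{y},\qquad \phi(0):=1,
\end{equation*}
so that $\tau a_\tau(e^{-y})=y\,\phi(y)$ and hence $a_\tau(e^{-\omega\tau})=\omega\,\phi(\omega\tau)$. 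Raising this identity to the power $\sigma$ reduces the claim to
\begin{equation*}
\big|a_\tau(e^{-\omega\tau})^\sigma-\omega^\sigma\big|=|\omega|^\sigma\,\big|\phi(\omega\tau)^\sigma-1\big|\le C|\omega|^\sigma|\omega\tau|^2=C\tau^2|\beta(z)|^{2+\sigma},
\end{equation*}
i.e.\ to proving $|\phi(y)^\sigma-1|\le C|y|^2$ on the bounded set where $y$ ranges.

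Next I would verify the pointwise bound on $\phi$ by splitting into two cases as in Lemma \ref{l2.6}. For $|y|\le\delta_0$ with $\delta_0$ small, a Taylor expansion gives
\begin{equation*}
1-e^{-y}=y-\tfrac{y^2}{2}+\tfrac{y^3}{6}+O(y^4),\qquad \big(1+\tfrac{\alpha}{2}-\tfrac{\alpha}{2}e^{-y}\big)^{1/\alpha}=1+\tfrac{y}{2}-\tfrac{1+\alpha}{8}y^2+O(y^3),
\end{equation*}
and the crucial cancellation is that the product equals $y+0\cdot y^2+O(y^3)$; the $y^2$-coefficient vanishes, which is precisely the source of second-order accuracy of the WSGD generating function. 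Consequently $\phi(y)=1+O(y^2)$ near the origin, and then the general expansion $(1+u)^\sigma-1=\sigma u+O(u^2)$ applied with $u=\phi(y)-1=O(y^2)$ yields $|\phi(y)^\sigma-1|\le C|y|^2$. For $\delta_0\le|y|\le C\pi/\sin\theta$, the function $\phi(y)^\sigma-1$ is continuous and nonvanishing on a compact set (Lemma \ref{l2.3} guarantees $\phi(y)\ne 0$ since $a_\tau(e^{-\omega\tau})$ is comparable to $\omega$), so it is bounded there, giving $|\phi(y)^\sigma-1|\le C\le C\delta_0^{-2}|y|^2$.

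Combining the two cases and substituting $y=\omega\tau$ with $\omega=\beta(z)$ yields the stated estimate. The main obstacle is really the Taylor computation: one must carry the expansion through third order in $y$ to see that the $O(y)$ and $O(y^2)$ corrections in $\phi$ conspire to leave only a quadratic remainder (this is the defining feature of the WSGD weights $(1+\alpha/2,-\alpha/2)$). A secondary point is that $\phi(y)^\sigma$ must be interpreted via a consistent branch of the fractional power; since $\phi(y)$ lies in a neighborhood of $1$ for small $y$ and is continuous for $y\in\Gamma_{\theta,\kappa}^\tau\cdot\tau$ by Lemma \ref{l2.3}, the principal branch is available throughout, so this is a mild technicality rather than a real difficulty.
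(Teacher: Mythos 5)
Your proposal is correct and follows essentially the same route as the paper: a Taylor expansion of the generating function near $\omega\tau=0$ exploiting the vanishing of the second-order coefficient (your $\phi(y)=1+O(y^2)$ is just a repackaging of the paper's $\gamma(e^{-y})=y^\alpha(1+d_1y^2+\cdots)$), followed by the same split at $\delta_0$ with a boundedness argument via Lemma \ref{l2.3} and the factor $\delta_0^{-2}|y|^2\ge 1$ on the outer region. The only cosmetic slip is calling $\phi(y)^\sigma-1$ ``nonvanishing'' when you mean that $\phi$ itself is bounded away from zero so that $\phi^\sigma$ is well defined and bounded; this does not affect the argument.
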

	\begin{proof}
	Let $\gamma(\zeta)=(1-\zeta)^\alpha(1+\frac{\alpha}{2}-\frac{\alpha}{2}\zeta)$ and $\omega:=\beta(z)$, we can derive that
	$$
	\begin{aligned}
	a_\tau(e^{-\omega \tau})^\sigma-\omega^\sigma & =\tau^{-\sigma}\big(\gamma(e^{-\omega\tau})^{\frac{\sigma}{\alpha}}-(\omega\tau)^\sigma \big)=\frac{(\omega\tau )^\sigma\left( 1+d_1(\omega\tau)^2+\cdots  \right)^{\frac{\sigma}{\alpha }}-(\omega\tau)^\sigma }{\tau^\sigma }  \\
	&=\frac{(\omega\tau )^\sigma\left( 1+\frac{{\gamma d_1}}{\alpha }(\omega\tau )^2+\cdots\right)-(\omega\tau)^\sigma }{\tau^\sigma }=\tau^{-\sigma}O\left((\omega\tau )^{2+\sigma} \right),~ \text{as}\ \omega\tau \to 0.  \\
	\end{aligned}
	$$
	This implies that there exists $0<{{\delta }_{0}}\le \frac{C\pi }{\sin \theta }$ such that
	$$
	\left| {{a}_{\tau }}({{e}^{-\omega \tau }})^\sigma-\omega^\sigma\right|\le C{{\tau }^{2}}{{\left| \omega  \right|}^{2+\sigma}},~~\ 0\le \left| \omega \tau \right|\le {\delta_0},\ z\in \Gamma_{\theta ,\kappa }^{\tau }.
	$$
	For the case ${\delta_0}\le \left| \omega \tau  \right|\le \frac{C\pi }{\sin \theta }$ with $z\in \Gamma_{\theta,\kappa }^{\tau}$, the results in Lemmas \ref{l2.2} and \ref{l2.3} follow that
	$$
	\begin{aligned}
	\left| a_\tau(e^{-\omega \tau})^\gamma-\omega^\gamma \right|&\le \left| a_\tau(e^{-\omega \tau})^\gamma \right|+\left| \omega^\gamma  \right|\le C\left| \omega  \right|^\gamma\le C(\left| \omega  \right|^{2+\gamma}\tau^2)\frac{1}{\left| \omega  \right|^2\tau^2}\\
	&\le C(\left| \omega  \right|^{2+\gamma}\tau^2)\frac{1}{\delta_0^2}\le C\left| \omega  \right|^{2+\gamma}\tau^2.
	\end{aligned}
	$$
	Thus, the result is obtained.
	\end{proof}

	\section{A second-order WSGD scheme}\label{Section3}
	In this section, we propose a second-order WSGD scheme in time for solving the backward fractional Feynman-Kac equation \eqref{2.1} by using the weighted and shifted Gr\"{u}nwald formula to approximate the Riemann-Liouville fractional substantial derivative in time with some initial corrections.

	We first recall the weighted and shifted Gr\"{u}nwald formula proposed in \cite{TianZD:2015} for approximating the Riemann-Liouville fractional derivative \eqref{eq:defRLfd}, and then discuss its extension to the approximation of the Riemann-Liouville fractional substantial derivative \eqref{eq:defRLfsd} in time.

	Let $0 = t_0 < t_1 < \cdots<t_N = T$ be a temproal partition of $[0,T]$ and $\tau$ be the step size with grid points $t_n=n\tau,\ n=0,1,\cdots,N$. By choosing the shift index $(p,q)=(0,-1)$ in \cite{TianZD:2015}, the Riemann-Liouville fractional derivative $_{0}^{{}}D_{t}^{\alpha }G(t)$ at $t = t_n$ with $n \ge 1$ can be approximated by
	\begin{equation}\label{eq:WSGL}
	{ }_{0} D_{t}^{\alpha} G(t_{n}) \approx\frac{1}{\tau^{\alpha}}\sum_{j=0}^{n} w_{n-j}^{(\alpha)}G(t_{j})
	\end{equation}
	with second-order of accuracy, where
	$$
	w_{0}^{(\alpha)}=\frac{\alpha+2}{2} g_{0}^{(\alpha)},\quad w_{j}^{(\alpha)}=\frac{\alpha+2}{2} g_{j}^{(\alpha)}-\frac{\alpha}{2} g_{j-1}^{(\alpha)},\quad j=1,2, \ldots, n,
	$$
	and the coefficients $\{g_j^{(\alpha)},j = 0,1,2,\cdots\}$ satisfy
	$$
	\delta_{1}(\zeta)^{\alpha}=\Big(\frac{1-\zeta}{\tau}\Big)^{\alpha}=\frac{1}{\tau^{\alpha}}\sum_{j=0}^{\infty} g_{j}^{(\alpha)} \zeta^{j}.
	$$
	The explicit expression of $g_{j}^{(\alpha)}$ is
	$$
	g_{j}^{(\alpha)}=(-1)^{j}\binom{\alpha}{j}=(-1)^{j}\frac{\Gamma(\alpha+1)}{\Gamma(j+1)\Gamma(\alpha-j+1)}.
	$$
	The weights $\{w_j^{(\alpha)},j = 0, 1, 2,\cdots\}$ satisfy
	\begin{equation*}
	\frac{1}{\tau^{\alpha}}\sum_{j=0}^{\infty} w_{j}^{(\alpha)} \zeta^{j}
	=\frac{1}{\tau^{\alpha}}\frac{\alpha+2}{2}\sum_{j=0}^{\infty}g_{j}^{(\alpha)} \zeta^{j}-\frac{1}{\tau^{\alpha}}\frac{\alpha}{2} \zeta\sum_{j=0}^{\infty} g_{j}^{(\alpha)}\zeta^{j}
	=\frac{(1-\zeta)^{\alpha}\big(1+\frac{\alpha}{2}-\frac{\alpha}{2} \zeta\big)}{\tau^\alpha},
	\end{equation*}
	then the corresponding generating function of the weights $\{w_j^{(\alpha)},j = 0, 1, 2,\cdots\}$ is $a_\tau(\zeta)^\alpha$ with $a_\tau(\zeta)$ denoted by
	\begin{equation}\label{atauzeta}
	a_\tau(\zeta)=\frac{(1-\zeta)\big(1+\frac{\alpha}{2}-\frac{\alpha}{2} \zeta\big)^{1/\alpha}}{\tau}.
	\end{equation}
	
	From the definition of the Riemann-Liouville fractional substantial derivative in \eqref{eq:defRLfsd}, its approximation by the formula \eqref{eq:WSGL} can be extended as follows
	$$
    {}_0D_t^{\alpha,x}G(t_n)\approx\frac{1}{\tau^{\alpha}}e^{-t_n\rho U(x)}\sum\limits_{j=0}^{n}w_j^{(\alpha)}e^{t_{n-j}\rho U(x)}G^{n-j}=\frac{1}{\tau^{\alpha}}\sum\limits_{j=0}^{n}w_j^{(\alpha)}e^{-t_{j}\rho U(x)}G^{n-j}.
	$$
	Similarly, the approximation of the Riemann-Liouville fractional substantial integral in \eqref{2.1} is
	$${}_{0}I_{t}^{1-\alpha,x}\phi(t_n)\approx\frac{1}{\tau^{\alpha-1}}\sum\limits_{j=0}^{n}w_{j}^{(\alpha-1)}e^{-t_j\rho U(x)}\phi^{n-j},
	$$
	where the coefficients $w_{j}^{(\alpha-1)}$ satisfy $\big(a_\tau(\zeta)\big)^{\alpha-1}=\frac{1}{\tau^{\alpha-1}}\sum_{j=0}^{\infty}w_j^{(\alpha-1)}\zeta^j$. \\
	
	Using the relationship ${}_0^CD_t^{\alpha,x}G(t)={}_0D_t^{\alpha,x}\big(G(t)-e^{-t\rho U(x)}G_0\big)$ and the above approximate formulae,
	we can obtain a temporal discrete scheme of the backward fractional Feynman-Kac equation \eqref{2.1} as follows
	\begin{equation}\label{3.5}
	\begin{aligned}
	&\frac{1}{\tau^{\alpha}}\sum\limits_{j=0}^{n-1}w_{j}^{(\alpha)}e^{-t_j\rho U(x)}G^{n-j}-\frac{1}{\tau^{\alpha}}\sum\limits_{j=0}^{n-1}w_{j}^{(\alpha)}e^{-t_n\rho U(x)}G_0+AG^n\\
	&=\frac{1}{\tau^{\alpha-1}}\sum\limits_{j=0}^{n}w_{j}^{(\alpha-1)}e^{-t_j\rho U(x)}f^{n-j},
	\end{aligned}
	\end{equation}
	where $f^n=f(t_n)$. As indicated in literatures \cite{JinLZ:2017,SunND:2020,SunND:2021a,WangYYP:2020}, the direct application of BDFs and the formula \eqref{eq:WSGL} for time-fractional equations can not achieves their optimal convergence orders, and some corrections should be meticulously designed, and this also happens to \eqref{3.5}.
	
	In order to capture the regularity property of the solution of \eqref{2.1} at $t=0$ and preserve the optimal second-order convergence rate, we correct the scheme \eqref{3.5} as follows
	\begin{equation}\label{3.6}
	\begin{aligned}
	&\frac{1}{\tau^{\alpha}}\sum\limits_{j=0}^{n-1}w_{j}^{(\alpha)}e^{-t_j\rho U(x)}G^{n-j}-\frac{1}{\tau^{\alpha}}\sum\limits_{j=0}^{n-1}w_{j}^{(\alpha)}e^{-t_n\rho U(x)}G_0+AG^n\\
	&=\frac{1}{2\tau^{\alpha}}w_{n-1}^{(\alpha)}e^{-t_n\rho U(x)}G_0+\frac{1}{\tau^{\alpha-1}}\sum\limits_{j=0}^{n-1}w_{j}^{(\alpha-1)}e^{-t_j\rho U(x)}f^{n-j}\\
	&~~~~+\frac{1}{2\tau^{\alpha-1}}w_{n-1}^{(\alpha-1)}e^{-t_{n-1}\rho U(x)}f^0,
	\end{aligned}
	\end{equation}
	for $n=1,2\cdots N.$
	
	\section{Error estimates}\label{Section4}
	In this section, we analyze the temporal error estimates for the WSGD scheme \eqref{3.6} for the homogeneous and inhomogeneous problems. The results depend only on the regularity assumptions on the data, without any regularity requirements on the solution of the equation.
	
	\subsection{Homogeneous case}\label{Section4.2}
	We first analyze the homogeneous case for the scheme \eqref{3.6}, i.e., $f=0$.
	Multiplying $\zeta^n$ on both sides of \eqref{3.6} and summing $n$ from $1$ to $\infty$ lead to
	\begin{equation*}
	\begin{aligned}
	&\frac{1}{\tau^\alpha}\sum\limits_{n=1}^{\infty}\sum\limits_{j=0}^{n-1}w_j^{(\alpha)} e^{-t_j\rho U(x)}G^{n-j}\zeta^n+\sum\limits_{n=1}^{\infty}AG^n\zeta^n\\
	&=\frac{1}{\tau^\alpha}\sum\limits_{n=1}^{\infty}\sum\limits_{j=0}^{n-1}w_j^{(\alpha)} e^{-t_n\rho U(x)}G_0\zeta^n+\frac{1}{2\tau^\alpha}\sum\limits_{n=1}^{\infty}w_{n-1}^{(\alpha)} e^{-t_n\rho U(x)}\zeta^nG_0.
	\end{aligned}
	\end{equation*}
	Recall that $a_\tau(\zeta)=(1-\zeta)(1+\frac{\alpha }{2}-\frac{\alpha}{2}\zeta)^{\frac{1}{\alpha}}/\tau$ in \eqref{atauzeta}. Then it follows that
	$$
	\big(a_\tau(e^{-\tau \rho U(x)}\zeta )^\alpha +A\big)\sum\limits_{n=1}^{\infty}G^n\zeta^n=a_\tau(e^{-\tau \rho U(x)}\zeta )^\alpha\Big(\frac{e^{-\tau \rho U(x)}\zeta }{1-e^{-\tau \rho U(x)}\zeta }+\frac{1}{2}e^{-\tau \rho U(x)}\zeta\Big)G_0,
	$$
	and we obtain
	$$
	\sum\limits_{n=1}^{\infty}G^n\zeta^n=\big(a_\tau(e^{-\tau \rho U(x)}\zeta )^\alpha +A\big)^{-1}a_\tau(e^{-\tau \rho U(x)}\zeta )^\alpha\Big(\frac{e^{-\tau \rho U(x)}\zeta }{1-e^{-\tau \rho U(x)}\zeta }+\frac{1}{2}e^{-\tau \rho U(x)}\zeta\Big)G_0.
	$$
	By Cauchy's integral formula and the definition of $\mu(\zeta)$ in \eqref{muzeta}, it holds that
	\begin{equation*}
	G^n=\frac{1}{2\pi \mathbf{i}}\int_{|\zeta|=\xi_\tau}\zeta^{-n-1}\big(a_\tau(e^{-\tau \rho U(x)}\zeta )^\alpha +A\big)^{-1}a_\tau(e^{-\tau \rho U(x)}\zeta )^{\alpha-1}\mu(e^{-\tau\rho U(x)}\zeta)\tau^{-1}G_0 d\zeta,
	\end{equation*}
	where $\xi_\tau=e^{-\tau(\kappa +1)}$. Let $\zeta=e^{-z\tau}$ and $\beta(z)$ be defined in \eqref{2.2}, we arrive at
	\begin{equation*}
	G^n=\frac{1}{2\pi \mathbf{i}}\int_{\Gamma^{\tau}}e^{zt_n}\big(a_\tau(e^{- \beta(z)\tau})^\alpha +A\big)^{-1}a_\tau(e^{- \beta(z)\tau} )^{\alpha-1}\mu(e^{-\beta(z)\tau})G_0dz,
	\end{equation*}
	where $\Gamma^\tau=\{z=\kappa+1+\mathbf{i} y:\ y \in \mathbb{R}\ \text {and}\ |y| \leq \pi /\tau\}$.
	By deforming the contour $\Gamma^\tau$ to $\Gamma_{\theta,\kappa}^\tau$, it implies
	\begin{equation}\label{4.6}
	G^n=\frac{1}{2\pi \mathbf{i}}\int_{\Gamma_{\theta,\kappa}^{\tau}}e^{zt_n}\big(a_\tau(e^{- \beta(z)\tau})^\alpha +A\big)^{-1}a_\tau(e^{- \beta(z)\tau} )^{\alpha-1}\mu(e^{-\beta(z)\tau})G_0dz.
	\end{equation}
	
	Before analyzing the error estimate, we provide the following lemma.
	\begin{lemma}\label{l4.2}
	Let $\beta(z)$, $\mu(\zeta)$ and $a_\tau(\zeta)$ be defined in \eqref{2.2}, \eqref{muzeta} and \eqref{atauzeta}, respectively, then it holds that
	$$
	\big\|\big(\beta (z)^\alpha+A \big)^{-1}\beta (z)^{\alpha-1}-\big(a_\tau(e^{-\beta(z) \tau })^\alpha +A\big)^{-1}a_\tau(e^{-\beta(z) \tau})^{\alpha-1}\mu(e^{- \beta(z)\tau})  \big\|\le C\tau^2\left| z \right|,\  z\in \Gamma_{\theta,\kappa}^\tau.
	$$
	\end{lemma}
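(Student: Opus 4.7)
The plan is to split the operator difference into three manageable pieces whose norms can each be bounded by $C\tau^2|z|$ using the preceding lemmas. Denote, for brevity, $B:=\beta(z)$ and $B_\tau:=a_\tau(e^{-\beta(z)\tau})$; the target quantity is
$$
\mathcal{E}(z)=(B^\alpha+A)^{-1}B^{\alpha-1}-(B_\tau^\alpha+A)^{-1}B_\tau^{\alpha-1}\mu(e^{-\beta(z)\tau}).
$$
First I would add and subtract $(B_\tau^\alpha+A)^{-1}B_\tau^{\alpha-1}$ to peel off the $\mu$-factor, writing $\mathcal{E}(z)=\mathcal{E}_1(z)+\mathcal{E}_2(z)$, where
$$
\mathcal{E}_1(z)=(B^\alpha+A)^{-1}B^{\alpha-1}-(B_\tau^\alpha+A)^{-1}B_\tau^{\alpha-1},\qquad \mathcal{E}_2(z)=(B_\tau^\alpha+A)^{-1}B_\tau^{\alpha-1}\bigl(1-\mu(e^{-\beta(z)\tau})\bigr).
$$

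The second term $\mathcal{E}_2(z)$ is immediate: Lemma \ref{l2.5} gives $\|(B_\tau^\alpha+A)^{-1}\|\le C|z|^{-\alpha}$, Lemma \ref{l2.3} gives $|B_\tau^{\alpha-1}|\le C|z|^{\alpha-1}$ (note $\alpha-1<0$), and Lemma \ref{l2.6} gives $|1-\mu(e^{-\beta(z)\tau})|\le C|z|^2\tau^2$; multiplying yields $\|\mathcal{E}_2(z)\|\le C\tau^2|z|$.

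For $\mathcal{E}_1(z)$ I would use the standard resolvent identity to further split it as
$$
\mathcal{E}_1(z)=(B^\alpha+A)^{-1}(B^{\alpha-1}-B_\tau^{\alpha-1})+(B_\tau^\alpha+A)^{-1}(B_\tau^\alpha-B^\alpha)(B^\alpha+A)^{-1}B_\tau^{\alpha-1}.
$$
The first piece is controlled by Lemma \ref{l2.2} ($\|(B^\alpha+A)^{-1}\|\le C|z|^{-\alpha}$) together with Lemma \ref{l2.7} applied with $\sigma=\alpha-1$ (which yields $|B^{\alpha-1}-B_\tau^{\alpha-1}|\le C\tau^2|z|^{1+\alpha}$), producing a bound of $C\tau^2|z|$. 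The second piece uses the resolvent bounds from Lemmas \ref{l2.2} and \ref{l2.5}, Lemma \ref{l2.3} to bound $|B_\tau^{\alpha-1}|$, and Lemma \ref{l2.7} with $\sigma=\alpha$ to bound $|B_\tau^\alpha-B^\alpha|\le C\tau^2|z|^{2+\alpha}$; multiplying the four factors $|z|^{-\alpha}\cdot\tau^2|z|^{2+\alpha}\cdot|z|^{-\alpha}\cdot|z|^{\alpha-1}$ again gives $C\tau^2|z|$.

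The only potentially delicate point is ensuring that the resolvent estimates of Lemma \ref{l2.5} are indeed available for the discrete symbol $B_\tau^\alpha$ on the entire contour $\Gamma_{\theta,\kappa}^\tau$ (so that the sectoriality $a_\tau(e^{-\beta(z)\tau})^\alpha\in\Sigma_\phi$ with $\phi<\pi$ can be invoked); aside from that verification, the argument is simply a careful bookkeeping of exponents, and the three pieces combine to give the desired estimate $\|\mathcal{E}(z)\|\le C\tau^2|z|$ for all $z\in\Gamma_{\theta,\kappa}^\tau$.
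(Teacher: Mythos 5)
Your proposal is correct and follows essentially the same route as the paper: a triangle-inequality decomposition into a resolvent-difference term, a symbol-difference term, and a $\mu$-correction term, each bounded by combining Lemmas \ref{l2.2}, \ref{l2.3}, \ref{l2.5}, \ref{l2.6} and \ref{l2.7} exactly as you describe. The only differences (peeling off the $\mu$-factor first, and which of $\beta(z)^{\alpha-1}$ or $a_\tau(e^{-\beta(z)\tau})^{\alpha-1}$ multiplies the resolvent difference) are cosmetic, since the relevant bounds are symmetric in the two symbols.
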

	\begin{proof}
	First, by the triangle inequality, it follows that
	$$
	\big\|\big(\beta (z)^\alpha+A\big)^{-1}\beta (z)^{\alpha-1}-\big(a_\tau(e^{-\beta(z) \tau })^\alpha +A\big)^{-1}a_\tau(e^{-\beta(z)\tau })^{\alpha-1}\mu(e^{- \beta(z)\tau})  \big\|\le I+II,
	$$
	where
	$$
	I=\big\|\big( (\beta (z)^\alpha+A )^{-1}-( a_\tau(e^{-\beta(z)\tau })^\alpha+A )^{-1}\big) \beta(z)^{\alpha-1}\big\|,
	$$
	and
	$$
	II=\big\|\big( a_\tau(e^{- \beta(z)\tau})^\alpha+A\big)^{-1}\big( \beta(z)^{\alpha-1}-a_\tau(e^{-\beta(z) \tau})^{\alpha-1}\mu(e^{-\beta(z)\tau})\big) \big\|.
	$$
	For $I$, by Lemmas \ref{l2.2}, \ref{l2.5} and \ref{l2.7}, we have
	$$
	I\le\big\|\big(\beta (z)^\alpha+A\big)^{-1}\big\|\cdot|a_\tau(e^{- \beta(z)\tau})^\alpha-\beta(z)^{\alpha}|\cdot\big\|\big(a_\tau(e^{- \beta(z)\tau})^\alpha+A\big)^{-1}\big\|\cdot|\beta(z)^{\alpha-1}|\le C\tau^2|z|.
	$$
	Similarly, for $II$, we can get the following inequality by Lemma \ref{l2.5},
	$$
	II\le C|z|^{-\alpha}(II_1+II_2),
	$$
	where $II_1=\| \beta(z)^{\alpha-1}-a_\tau(e^{-\beta(z)\tau})^{\alpha-1}\|$ and $II_2=\|a_\tau(e^{- \beta(z)\tau})^{\alpha-1}\big(1-\mu(e^{-\beta(z)\tau})\big)\|$.
    Additionally, Lemmas \ref{l2.3}, \ref{l2.6} and \ref{l2.7} imply that
	$$
	II_1\le C\tau^2|z|^{\alpha+1},\quad II_2\le C\tau^2|z|^{\alpha+1}.
	$$
	Consequently, it yields that $II\le C\tau^2|z|$. In summary, we obtain $I+II\le C\tau^2|z|$, which completes the proof of the lemma.
	\end{proof}
	
	By using Lemma \ref{l4.2}, we can derive the error estimate of the WSGD scheme \eqref{3.6} for the  homogeneous problem in the following theorem.
	
	\begin{theorem}\label{t4.2}
	Let $U(x)$ be bounded in $\bar{\Omega}$, $G(t_n)$ and $G^n$ be the solutions of \eqref{2.1} and \eqref{3.6} respectively, and $\tau\leq \tau_*$ with $\tau_*$ being a positive constant. For $f=0$ and $G_0\in L^2(\Omega)$, we have
	$$\| G( t_n )-G^n\|\le C\tau^2t_n^{-2}\| G_0\|.$$
	\end{theorem}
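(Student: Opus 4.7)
The plan is to exploit the Laplace-inversion representations already established. Taking the difference of \eqref{4.2} (with $f=0$) and \eqref{4.6}, the error splits naturally as $G(t_n)-G^n = E_1+E_2$, where $E_1$ is the tail of the exact contour integral on $\Gamma_{\theta,\kappa}\setminus\Gamma_{\theta,\kappa}^\tau$ and $E_2$ is the integral on $\Gamma_{\theta,\kappa}^\tau$ of the kernel difference
$$
K(z):=\bigl(\beta(z)^\alpha+A\bigr)^{-1}\beta(z)^{\alpha-1}-\bigl(a_\tau(e^{-\beta(z)\tau})^\alpha+A\bigr)^{-1}a_\tau(e^{-\beta(z)\tau})^{\alpha-1}\mu(e^{-\beta(z)\tau}).
$$
The piece $E_2$ is controlled directly by Lemma \ref{l4.2}, while $E_1$ requires a straightforward but delicate tail estimate for the exact integral.

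For $E_2$, Lemma \ref{l4.2} yields $\|K(z)\|\le C\tau^2|z|$, so the integrand is bounded by $C\tau^2|z|e^{\mathrm{Re}(z)t_n}\|G_0\|$. I would parametrize $\Gamma_{\theta,\kappa}^\tau$ as the two rays $z=re^{\pm\mathbf{i}\theta}$, $\kappa\le r\le\pi/(\tau\sin\theta)$, together with the small arc $|z|=\kappa$. On the rays $\mathrm{Re}(z)=r\cos\theta<0$, and the substitution $s=rt_n$ gives
$$
\int_\kappa^{\pi/(\tau\sin\theta)} e^{r\cos\theta\, t_n}\, r\,dr \le t_n^{-2}\int_0^\infty e^{s\cos\theta}s\,ds \le Ct_n^{-2}.
$$
The arc contribution is an $O(1)$ quantity which, for $t_n\in(0,T]$, is absorbed into $Ct_n^{-2}$ with the constant depending on $T$ and $\kappa$. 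Hence $\|E_2\|\le C\tau^2 t_n^{-2}\|G_0\|$.

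For $E_1$, Lemma \ref{l2.2} together with $C_1|z|\le|\beta(z)|\le C_2|z|$ yields $\|(\beta(z)^\alpha+A)^{-1}\beta(z)^{\alpha-1}\|\le C|z|^{-1}$. Since $\Gamma_{\theta,\kappa}\setminus\Gamma_{\theta,\kappa}^\tau$ is the portion of the two rays with $|z|\ge\pi/(\tau\sin\theta)$,
$$
\|E_1\|\le C\|G_0\|\int_{\pi/(\tau\sin\theta)}^\infty e^{r\cos\theta\, t_n}\, r^{-1}\,dr \le C\|G_0\|\,\frac{\tau}{t_n}\, e^{-c t_n/\tau}
$$
for some $c>0$ depending only on $\theta$. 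Because $n\ge 1$ forces $t_n/\tau\ge 1$, the elementary bound $x e^{-cx}\le C$ on $[0,\infty)$ converts this exponential decay into $\|E_1\|\le C\tau^2 t_n^{-2}\|G_0\|$. Summing the two estimates yields the stated bound.

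The main technical care lies in $E_1$: one must trade genuine exponential decay in $t_n/\tau$ for the polynomial rate $\tau^2 t_n^{-2}$, and this hinges crucially on $t_n\ge\tau$ for $n\ge 1$. The treatment of $E_2$ is then a routine sectorial contour computation, since the hard analytic work has already been packaged into Lemma \ref{l4.2} and the sectorial and pointwise bounds of Lemmas \ref{l2.2}--\ref{l2.7} that feed it.
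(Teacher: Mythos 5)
Your proposal is correct and follows essentially the same route as the paper: the identical decomposition of $G(t_n)-G^n$ into the tail integral over $\Gamma_{\theta,\kappa}\setminus\Gamma_{\theta,\kappa}^\tau$ and the kernel-difference integral over $\Gamma_{\theta,\kappa}^\tau$, with the latter controlled by Lemma \ref{l4.2} exactly as in the text. The only cosmetic difference is in the tail term, where the paper simply uses $r^{-2}\le C\tau^2$ on $r\ge \pi/(\tau\sin\theta)$ while you trade the exponential decay $e^{-ct_n/\tau}$ for $\tau^2t_n^{-2}$ via $xe^{-cx}\le C$; both give the same bound.
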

	\begin{proof}
	Subtracting (\ref{4.6}) from \eqref{4.2} implies
	$$\begin{aligned}
	\left\| G(t_n)-G^n\right\|
	& \le C\Big\| \int_{\Gamma_{\theta,\kappa}} e^{zt_n}( \beta(z)^{\alpha}+A)^{-1}\beta(z)^{\alpha-1}G_0dz\\
	&\qquad -\int_{\Gamma_{\theta,\kappa}^\tau} e^{zt_n}\big( (a_\tau(e^{-\beta(z)\tau}))^{\alpha}+A\big)^{-1}a_\tau(e^{- \beta(z)\tau} )^{\alpha-1}\mu(e^{-\beta(z)\tau})G_0dz\Big\| \\
	& \le C\Big\|\int_{\Gamma_{\theta,\kappa}\backslash\Gamma_{\theta,\kappa}^\tau}
	e^{zt_n}(\beta(z)^{\alpha}+A)^{-1}\beta (z)^{\alpha-1}dz\Big\|\|G_0\| \\
	& \quad+C\Big\|\int_{\Gamma_{\theta,\kappa}^\tau}e^{zt_n}\big((\beta (z)^\alpha+A )^{-1}\beta (z)^{\alpha-1}\\
	&\qquad\qquad -(a_\tau(e^{-\beta(z) \tau })^{\alpha}+A )^{-1}a_\tau(e^{- \beta(z)\tau} )^{\alpha-1}\mu(e^{-\beta(z)\tau}) \big)dz \Big\|\|G_0\| \\
	& \le C(I+II)\| G_0 \|,
	\end{aligned}$$
	where
	$$
	\Gamma_{\theta,\kappa}\backslash\Gamma_{\theta,\kappa}^\tau=\big\{ z\in \mathbb{C}:z=re^{\pm \mathbf{i}\theta },~\frac{\pi }{\tau \sin\theta}\le r<\infty \big\},~~\theta\in\big(\pi/2,\pi\big).
	$$
	
	For $I$, we have from Lemma \ref{l2.2} that
	$$
	I\le \int_{\Gamma_{\theta,\kappa}\backslash\Gamma_{\theta,\kappa}^\tau}\left| e^{zt_n} \right|\left| z \right|^{-1}\left| dz \right|\le \int_{\frac{\pi }{\tau \sin\theta}}^{\infty}e^{rt_n\cos\theta}r^{-1}dr\le \int_{\frac{\pi }{\tau \sin\theta}}^{\infty}r^{-2}e^{rt_n\cos\theta}rdr\le C\tau^2t_n^{-2}.
	$$
	where $r^{-2}\le \frac{\tau^2\sin^2\theta}{\pi^2}\le C\tau^2$ as $r\ge\frac{\pi }{\tau \sin \theta }$.
	For $II$, combing with Lemma \ref{l4.2}, it holds that
	$$
	\begin{aligned}
	II&\le\int_{\Gamma_{\theta,\kappa}^\tau}|e^{zt_n}|\big\|(\beta (z)^\alpha+A )^{-1}\beta (z)^{\alpha-1}-(a_\tau(e^{-\beta(z) \tau })^{\alpha}+A )^{-1}a_\tau(e^{- \beta(z)\tau} )^{\alpha-1}\mu(e^{-\beta(z)\tau})\big\||dz|\\
	& \le C\tau^2\int_{\Gamma_{\theta ,\kappa }^{\tau }}{| e^{zt_n} |}\left| z \right|\left| dz \right|
	  \le C\tau^2\Big( \int_{\kappa }^{\frac{\pi }{\tau \sin\theta}}e^{rt_n\cos\theta}rdr+\int_{-\theta }^{\theta }e^{\kappa t_n \cos\varphi}\kappa^2d\varphi\Big) \\
	& \le C\tau^2t_n^{-2}.
	\end{aligned}
	$$
	This completes the proof of the theorem.
	\end{proof}
	
	\subsection{Inhomogeneous case}
	Now we turn to the inhomogeneous problem $f\ne 0$ with zero initial data $G_0=0$. In such case, we have from \eqref{2.3} that
	\begin{equation}\label{4.7}
	\widetilde{G}(z)=\big(\beta(z)^\alpha+A\big)^{-1}\beta(z)^{\alpha-1}\tilde{f}(z).
	\end{equation}
	Inspired by \cite{JinLZ:2018a,SunND:2021a}, we consider the Taylor expansion $f(t)=f(0)+tf'(0)+\big(t*f''(t)\big)(t)$, and estimate the error by two steps.
	
	We first consider the case of $f(t)=f(0)+tf'(0)$, then the equation \eqref{4.7} becomes
	\begin{equation*}
	\widetilde{G}(z)=\big(\beta(z)^\alpha+A\big)^{-1}\beta(z)^{\alpha-1}\left(z^{-1}f(0)+z^{-2}f^{\prime}(0)\right).
	\end{equation*}
	Taking the inverse Laplace transform on both sides derives that
	\begin{equation}\label{4.8}
	G(t_n)=\frac{1}{2\pi \textbf{i}}\int_{\Gamma_{\theta ,\kappa }}e^{zt_n}\big(\beta (z)^\alpha+A\big)^{-1}\beta (z)^{\alpha-1}\left(z^{-1}f(0)+z^{-2}f^{\prime}(0)\right)dz.
	\end{equation}
	For $f(t)=f(0)+tf'(0)$ and $G_0=0$, multiplying $\zeta^n$ on both sides of \eqref{3.6} and summing up with simple calculations, we can obtain
	$$
	\sum\limits_{n=1}^{\infty}G^n\zeta^n=\big(a_\tau(e^{-\tau \rho U(x)}\zeta )^\alpha +A\big)^{-1}a_\tau(e^{-\tau \rho U(x)}\zeta )^{\alpha-1}\Big(\big(\frac{\zeta }{1-\zeta }+\frac{1}{2}\zeta\big)f(0)+\frac{\tau\zeta}{(1-\zeta)^2}f'(0)\Big).
	$$
	The solution can be represented as follows by Cauchy's integral formula
	\begin{equation}\label{4.10}
	\begin{aligned}
	G^n=\frac{1}{2\pi \mathbf{i}}\int_{\Gamma_{\theta,\kappa }^{\tau}}&e^{zt_n}(a_\tau(e^{- \beta(z)\tau})^\alpha +A)^{-1}a_\tau(e^{-\beta(z)\tau} )^{\alpha-1}\\
	&\Big(a_\tau(e^{- z\tau} )^{-1}\mu(e^{-z\tau })f(0)+\frac{\tau^2e^{-z\tau}}{(1-e^{-z\tau})^2}f'(0)\Big)dz,
	\end{aligned}
	\end{equation}
	where $\mu(\zeta)$ is defined by \eqref{muzeta}.
	
	\begin{lemma}\label{l4.3}
	Let
	$$
	\begin{aligned}
	&\mathcal{A}(z)=
	\big(\beta (z)^\alpha+A\big)^{-1}\beta (z)^{\alpha-1}z^{-1}-\big(a_\tau(e^{- \beta(z)\tau})^\alpha +A\big)^{-1}a_\tau(e^{- \beta(z)\tau} )^{\alpha-1}a_\tau(e^{-z\tau } )^{-1}\mu(e^{- z\tau}),\\
	&\mathcal{B}(z)=
	\big(\beta (z)^\alpha+A\big)^{-1}\beta (z)^{\alpha-1}z^{-2}-\big(a_\tau(e^{- \beta(z)\tau})^\alpha +A\big)^{-1}a_\tau(e^{- \beta(z)\tau})^{\alpha-1}\frac{\tau^2e^{-z\tau}}{(1-e^{-z\tau})^2},
	\end{aligned}
	$$
	where $\beta(z)$, $\mu(\zeta)$ and $a_\tau(\zeta)$ are respectively defined in \eqref{2.2}, \eqref{muzeta} and \eqref{atauzeta}. Then we have
	\begin{itemize}
	  \item[(i)] $\|\mathcal{A}(z)\|\le C\tau^2,~~\forall~ z\in \Gamma_{\theta,\kappa}^\tau$,
	  \item[(ii)] $\|\mathcal{B}(z)\|\le C\tau^2|z|^{-1},~~\forall~ z\in \Gamma_{\theta,\kappa}^\tau$.
	\end{itemize}
	\end{lemma}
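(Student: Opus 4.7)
The plan is to prove both parts by the same general strategy used in Lemma \ref{l4.2}: introduce an intermediate term evaluated at $\beta(z)$, bound the first piece via Lemma \ref{l4.2} itself, and control the remaining scalar discrepancy between the continuous symbol $z^{-\sigma}$ ($\sigma\in\{1,2\}$) and its discrete analogue via Taylor-type versions of Lemmas \ref{l2.6} and \ref{l2.7}.

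For (i), I would split $\mathcal{A}(z) = \mathcal{A}_1(z) + \mathcal{A}_2(z)$ by inserting the intermediate $(a_\tau(e^{-\beta(z)\tau})^\alpha+A)^{-1}a_\tau(e^{-\beta(z)\tau})^{\alpha-1}\mu(e^{-\beta(z)\tau}) z^{-1}$, so that
\begin{align*}
\mathcal{A}_1(z) &= \Big[(\beta(z)^\alpha+A)^{-1}\beta(z)^{\alpha-1} - (a_\tau(e^{-\beta(z)\tau})^\alpha+A)^{-1}a_\tau(e^{-\beta(z)\tau})^{\alpha-1}\mu(e^{-\beta(z)\tau})\Big] z^{-1}, \\
\mathcal{A}_2(z) &= (a_\tau(e^{-\beta(z)\tau})^\alpha+A)^{-1} a_\tau(e^{-\beta(z)\tau})^{\alpha-1}\big[\mu(e^{-\beta(z)\tau})z^{-1} - a_\tau(e^{-z\tau})^{-1}\mu(e^{-z\tau})\big].
\end{align*}
Lemma \ref{l4.2} gives $\|\mathcal{A}_1(z)\| \le C\tau^2|z|\cdot|z|^{-1} = C\tau^2$. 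For $\mathcal{A}_2(z)$, Lemmas \ref{l2.3} and \ref{l2.5} combine to yield $\|(a_\tau(e^{-\beta(z)\tau})^\alpha+A)^{-1} a_\tau(e^{-\beta(z)\tau})^{\alpha-1}\| \le C|z|^{-\alpha}\cdot|z|^{\alpha-1} = C|z|^{-1}$, so it suffices to prove the scalar bound $|\mu(e^{-\beta(z)\tau})z^{-1} - a_\tau(e^{-z\tau})^{-1}\mu(e^{-z\tau})| \le C\tau^2|z|$. I would split this as
\[
[\mu(e^{-\beta(z)\tau})-1]z^{-1} + [z^{-1} - a_\tau(e^{-z\tau})^{-1}] + a_\tau(e^{-z\tau})^{-1}[1-\mu(e^{-z\tau})]
\]
and bound the three summands by $C\tau^2|z|$ using Lemma \ref{l2.6} (with $\omega=\beta(z)$), Lemma \ref{l2.7} (at $\sigma=-1$ with $\omega=z$), and Lemmas \ref{l2.3} and \ref{l2.6} (both with $\omega=z$), respectively.

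For (ii), the same splitting works with $z^{-1}$ replaced by $z^{-2}$ and $a_\tau(e^{-z\tau})^{-1}\mu(e^{-z\tau})$ replaced by $\tau^2 e^{-z\tau}(1-e^{-z\tau})^{-2}$. Lemma \ref{l4.2} gives $\|\mathcal{B}_1(z)\|\le C\tau^2|z|\cdot|z|^{-2}=C\tau^2|z|^{-1}$. For the analogue $\mathcal{B}_2(z)$, after the same $C|z|^{-1}$ bound on the operator prefactor, I need $|\mu(e^{-\beta(z)\tau})z^{-2} - \tau^2 e^{-z\tau}(1-e^{-z\tau})^{-2}| \le C\tau^2$. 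I decompose this as $[\mu(e^{-\beta(z)\tau})-1]z^{-2} + [z^{-2}-\tau^2 e^{-z\tau}(1-e^{-z\tau})^{-2}]$. Lemma \ref{l2.6} handles the first summand, yielding $|\mu(e^{-\beta(z)\tau})-1|\cdot|z|^{-2}\le C\tau^2|z|^2\cdot|z|^{-2} = C\tau^2$. For the second, setting $h:=z\tau$ and factoring $\tau^2$ reduces the task to showing $|h^{-2}-e^{-h}(1-e^{-h})^{-2}| \le C$ uniformly for $h$ in the image of $\Gamma_{\theta,\kappa}^\tau$ under multiplication by $\tau$; a Taylor expansion near $h=0$ shows that the singularities cancel (with limiting value $1/12$), and continuity on the remaining bounded region yields the uniform bound.

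The main technical point is the cross-argument scalar residual in $\mathcal{A}_2$ and $\mathcal{B}_2$, where $\mu$ and $a_\tau$ appear evaluated at the two distinct arguments $\beta(z)\tau$ and $z\tau$. This forces Lemmas \ref{l2.6} and \ref{l2.7} to be used both in the form stated (at $\omega=\beta(z)$) and in the natural analogue at $\omega=z$. Inspecting the proofs of those lemmas, the only substantive input is that $|\omega\tau|$ is uniformly bounded on $\Gamma_{\theta,\kappa}^\tau$, which is manifest for $\omega=z$ by definition of the truncated contour, so the estimates transfer verbatim. Once this observation is in place, both (i) and (ii) reduce to triangle-inequality bookkeeping.
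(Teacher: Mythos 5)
Your proof is correct and follows the same basic strategy as the paper's — a resolvent-perturbation term plus a scalar symbol-comparison term, both controlled by Lemmas \ref{l2.2}, \ref{l2.3}, \ref{l2.5}, \ref{l2.6} and \ref{l2.7} — but the bookkeeping is arranged differently. The paper telescopes through the intermediate term $\big(a_\tau(e^{-\beta(z)\tau})^\alpha+A\big)^{-1}\beta(z)^{\alpha-1}z^{-\sigma}$ and re-derives the resolvent-difference estimate from scratch, whereas you insert $\big(a_\tau(e^{-\beta(z)\tau})^\alpha+A\big)^{-1}a_\tau(e^{-\beta(z)\tau})^{\alpha-1}\mu(e^{-\beta(z)\tau})z^{-\sigma}$ so that the first piece is exactly Lemma \ref{l4.2} multiplied by $z^{-\sigma}$; this is a mild economy, at the cost of carrying the extra factor $\mu(e^{-\beta(z)\tau})$ into the scalar residual, which you then strip off with Lemma \ref{l2.6}. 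Two further points of divergence are worth noting. First, for the estimate $\big|z^{-2}-\tau^2e^{-z\tau}(1-e^{-z\tau})^{-2}\big|\le C\tau^2$ the paper simply cites \cite[Lemma 3.4]{GunzburgerW:2019b}, while you prove it directly by the Taylor cancellation at $h=z\tau=0$ (limiting value $1/12$) plus compactness on the truncated contour; your observation that $1-e^{-h}$ has no zero other than $h=0$ in the relevant strip is the only thing needed to make the continuity argument airtight. Second, you explicitly flag and justify the fact that Lemmas \ref{l2.3}, \ref{l2.6} and \ref{l2.7} must also be applied with $z$ in place of $\beta(z)$ (since $\mu$ and $a_\tau$ appear at both arguments $e^{-\beta(z)\tau}$ and $e^{-z\tau}$); the paper uses these cross-argument versions silently, so your remark that the proofs transfer because $|z\tau|$ is bounded on $\Gamma_{\theta,\kappa}^\tau$ actually closes a small expository gap in the original.
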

	\begin{proof}
	(i) It has from the formula of $\mathcal{A}(z)$ that
	$$
	\|\mathcal{A}(z)\|\le I+II,
	$$
	where
	$$
	I=\big\|  \big(( \beta (z)^\alpha+A )^{-1}-(a_\tau(e^{- \beta(z)\tau})^\alpha+A )^{-1}\big) \beta(z)^{\alpha-1}z^{-1}\big\|,
	$$
	and
	$$
	II=\big\| ( a_\tau(e^{- \beta(z)\tau})^\alpha+A )^{-1}\big( \beta(z)^{\alpha-1}z^{-1}-a_\tau(e^{-\beta(z) \tau})^{\alpha-1}a_\tau(e^{-z\tau})^{-1}\mu(e^{-z\tau})\big) \big\|.
	$$
	For $I$, by Lemmas \ref{l2.2}, \ref{l2.5} and \ref{l2.7}, we have
	$$
	I\le C\tau^2.
	$$
	Similarly, for $II$, an application of Lemma \ref{l2.5} implies
	$$
	II\le C|z|^{-\alpha}(II_1+II_2+II_3),
	$$
	where $II_1=\|\beta(z)^{\alpha-1}\big(z^{-1}-a_\tau(e^{-z\tau})^{-1}\big)\|$, $II_2=\| \big(\beta(z)^{\alpha-1}-a_\tau(e^{-\beta(z)\tau})^{\alpha-1}\big)a_\tau(e^{-z\tau})^{-1} \|$ and $II_3=\|a_\tau(e^{-\beta(z) \tau})^{\alpha-1}a_\tau(e^{-z\tau})^{-1}\big(1-\mu(e^{-z\tau})\big)\|$.
	Then we can obtain
	$$
	II_1\le C\tau^2|z|^{\alpha},~~II_2\le C\tau^2|z|^{\alpha},~~II_3\le C\tau^2|z|^{\alpha}
	$$
	by Lemmas \ref{l2.2}, \ref{l2.3}, \ref{l2.6} and \ref{l2.7}.
	Consequently, $II\le C\tau^2$. In summary,  $\|\mathcal{A}(z)\|\le C\tau^2$.
	
	(ii) It easily follows that $\mathcal{B}(z)$ satisfies
	$$
	\|\mathcal{B}(z)\|\le III+IV,
	$$	
	where
	$$
	III=\big\|  \big( ( \beta (z)^\alpha+A )^{-1}-( a_\tau(e^{-\beta(z)\tau })^\alpha+A )^{-1}\big) \beta(z)^{\alpha-1}z^{-2}\big\|,
	$$
	and
	$$
	IV=\Big\| \big( a_\tau(e^{- \beta(z)\tau})^\alpha+A \big)^{-1}\Big( \beta(z)^{\alpha-1}z^{-2}-a_\tau(e^{-\beta(z) \tau})^{\alpha-1}\frac{\tau^2e^{-z\tau}}{(1-e^{-z\tau})^2}\Big) \Big\|.
	$$
	For $III$, by Lemmas \ref{l2.2}, \ref{l2.5} and \ref{l2.7}, we have
	$$
	III\le C\tau^2|z|^{-1}.
	$$
	For $IV$, it yields from Lemma \ref{l2.5} that
	$$
	IV\le C|z|^{-\alpha}(IV_1+IV_2).
	$$
	where $IV_1=\|\big(\beta(z)^{\alpha-1}-a_\tau(e^{-\beta(z)\tau})^{\alpha-1}\big)z^{-2}\|$ and $IV_2=\big\| a_\tau(e^{- \beta(z)\tau})^{\alpha-1}\big(z^{-2}-\frac{\tau^2e^{-z\tau}}{(1-e^{-z\tau})^2}\big)\big\|$.
	We can also obtain
	$$
	IV_1\le C\tau^2|z|^{\alpha-1},~~IV_2\le C\tau^2|z|^{\alpha-1}.
	$$
	by applying Lemmas \ref{l2.3}, \ref{l2.7} and the estimate $\big|z^{-2}-\frac{\tau^2e^{-z\tau}}{(1-e^{-z\tau})^2}\big|\le C\tau^2$ in \cite[Lemma 3.4]{GunzburgerW:2019b}.
	Hence, it holds $IV\le C\tau^2|z|^{-1}$ which implies that $\|\mathcal{B}(z)\|\le III+IV\le C\tau^2|z|^{-1}$.
	This completes the proof.
	\end{proof}
	
	By the above lemma, we can obtain the error estimate for the case $f(t)=f(0)+tf'(0)$ and $G_0=0$ in the following theorem, its proof is analogous to the approach for Theorem \ref{t4.2}.
	
	\begin{theorem}\label{t4.3}
	Let $U(x)$ be bounded in $\bar{\Omega}$, $G(t_n)$ and $G^n$ be the solutions of \eqref{2.1} and \eqref{3.6}, respectively, and $\tau\leq \tau_*$ with $\tau_*$ being a positive constant. If $f(t)=f(0)+tf'(0)$ and $G_0=0$, then we have
	$$
	  \| G(t_n)-G^n \|\le C\tau^2\big(t_n^{-1}\| f(0) \|+\| f'(0) \|\big).
	$$
	\end{theorem}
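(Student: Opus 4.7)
The plan is to follow the proof of Theorem \ref{t4.2} line by line, but now treating the two source-side terms $z^{-1}f(0)$ and $z^{-2}f'(0)$ appearing in \eqref{4.8} separately, and invoking Lemma \ref{l4.3} in place of Lemma \ref{l4.2}. Concretely, I would subtract \eqref{4.10} from \eqref{4.8} and deform the contour $\Gamma_{\theta,\kappa}$ so that it agrees with $\Gamma_{\theta,\kappa}^\tau$ on their common range. This splits the error into a tail integral over $\Gamma_{\theta,\kappa}\setminus\Gamma_{\theta,\kappa}^\tau$ (involving only the exact symbol) plus an interior integral over $\Gamma_{\theta,\kappa}^\tau$ of $e^{zt_n}\bigl(\mathcal{A}(z)f(0)+\mathcal{B}(z)f'(0)\bigr)$ with $\mathcal{A},\mathcal{B}$ as in Lemma \ref{l4.3}, giving four pieces to estimate.

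For the tail, Lemma \ref{l2.2} bounds the $f(0)$-integrand by $C|z|^{-2}\|f(0)\|$ and the $f'(0)$-integrand by $C|z|^{-3}\|f'(0)\|$. The $f'(0)$ tail follows at once from $\int_{\pi/(\tau\sin\theta)}^\infty r^{-3}\,dr\leq C\tau^2$, while the $f(0)$ tail is handled by one integration by parts together with $e^{-aR_0}\leq 1$, giving $\int_{R_0}^\infty e^{-ar}r^{-2}\,dr\leq 1/(aR_0^2)=C\tau^2 t_n^{-1}$ with $R_0=\pi/(\tau\sin\theta)$ and $a=-t_n\cos\theta>0$. For the interior I would plug in $\|\mathcal{A}(z)\|\leq C\tau^2$ and $\|\mathcal{B}(z)\|\leq C\tau^2|z|^{-1}$ from Lemma \ref{l4.3} and split $\Gamma_{\theta,\kappa}^\tau$ into its arc $|z|=\kappa$ and its two rays $|\arg z|=\theta$. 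The $f(0)$ piece becomes $C\tau^2\int_{\Gamma_{\theta,\kappa}^\tau}|e^{zt_n}|\,|dz|$, bounded on the arc by a constant and on the rays by $2\int_\kappa^{R_0}e^{-ar}\,dr\leq C/t_n$, yielding $C\tau^2 t_n^{-1}\|f(0)\|$. The $f'(0)$ piece becomes $C\tau^2\int_{\Gamma_{\theta,\kappa}^\tau}|e^{zt_n}||z|^{-1}\,|dz|$, with a uniformly bounded arc contribution and a ray contribution $\int_\kappa^{R_0}e^{-ar}r^{-1}\,dr$ that should yield a $t_n$-independent constant, contributing $C\tau^2\|f'(0)\|$.

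The main obstacle is exactly this last ray estimate: $\int_\kappa^{R_0}e^{-ar}r^{-1}\,dr$ is an exponential-integral difference $E_1(a\kappa)-E_1(aR_0)$, and a naive bound produces a $|\log(a\kappa)|\sim|\log t_n|$ factor as $t_n\to 0^+$. Removing this logarithm requires either a refined range split at $r=1/a$ combined with exponential decay on $[1/a,R_0]$ and the bound $t_n\leq T$, or a sharper decomposition of $\mathcal{B}(z)$ that uses the complementary $O(|z|^{-3})$ bound (valid since both $\beta(z)^{\alpha-1}z^{-2}$ and $a_\tau(e^{-\beta(z)\tau})^{\alpha-1}\tau^2 e^{-z\tau}/(1-e^{-z\tau})^2$ are $O(|z|^{-3})$) on the high-frequency portion $|z|\geq 1/\tau$ while keeping the $O(\tau^2|z|^{-1})$ bound on $|z|\leq 1/\tau$. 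Once this delicate step is handled, summing the four contributions produces the stated bound $C\tau^2\bigl(t_n^{-1}\|f(0)\|+\|f'(0)\|\bigr)$, and the same strategy (together with a Taylor-remainder estimate) will be applied to treat the full inhomogeneous source $f(t)=f(0)+tf'(0)+(t\ast f'')(t)$.
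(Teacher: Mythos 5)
Your proposal reproduces the paper's own proof of Theorem \ref{t4.3}: the same subtraction of \eqref{4.10} from \eqref{4.8}, the same four-way split into tail integrals over $\Gamma_{\theta,\kappa}\setminus\Gamma_{\theta,\kappa}^\tau$ and interior integrals of $e^{zt_n}\mathcal{A}(z)$ and $e^{zt_n}\mathcal{B}(z)$ over $\Gamma_{\theta,\kappa}^\tau$, and the same invocation of Lemmas \ref{l2.2} and \ref{l4.3}. Your treatment of the two tails and of the $\mathcal{A}$-integral matches the paper's.

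The step you single out as the main obstacle is indeed the only delicate point --- the paper simply writes $IV\le C\tau^2\int_{\Gamma_{\theta,\kappa}^\tau}|e^{zt_n}||z|^{-1}\,|dz|\le C\tau^2$ without justification --- but neither of your two proposed repairs closes it. Splitting the ray integral at $r=1/a$ with $a=t_n|\cos\theta|$ leaves the low-frequency piece $\int_\kappa^{1/a}r^{-1}\,dr=\log\bigl(1/(a\kappa)\bigr)$, and the constraint $t_n\le T$ bounds $\log(1/t_n)$ from \emph{below}, not above; the bad regime is $t_n=\tau\to0$, where this piece is of order $\log(1/\tau)$. Likewise, using the complementary bound $\|\mathcal{B}(z)\|\le C|z|^{-3}$ for $|z|\ge1/\tau$ still leaves $C\tau^2\int_\kappa^{1/\tau}e^{-ar}r^{-1}\,dr$, which carries the same logarithm. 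The standard way out, and the one implicitly needed to make the paper's display correct, is to exploit the freedom in the contour: by the analyticity established in Lemma \ref{l2.5} one may deform $\Gamma_{\theta,\kappa}^\tau$ so that its circular arc has radius $\kappa_n:=\max(\kappa,c/t_n)$, which is admissible since all resolvent bounds hold for any radius at least $\kappa$ and $t_n\ge\tau$ guarantees $\kappa_n\le\pi/(\tau\sin\theta)$ for a suitable $c$. Then $a\kappa_n\ge c|\cos\theta|$ is bounded below, so the ray contribution satisfies $\int_{\kappa_n}^{\infty}e^{-ar}r^{-1}\,dr\le\kappa_n^{-1}\int_{\kappa_n}^{\infty}e^{-ar}\,dr\le(a\kappa_n)^{-1}\le C$, while the arc contributes $\int_{-\theta}^{\theta}e^{\kappa_n t_n\cos\varphi}\,d\varphi\le C$; this yields $IV\le C\tau^2$ with no logarithm (and the bounds for $I$, $II$, $III$ are unaffected by this choice). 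With that substitution your argument is complete and coincides with the paper's.
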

	\begin{proof}
	Subtracting (\ref{4.10}) from (\ref{4.8}), it has
	$$
	\begin{aligned}
	&\left\|G(t_n)-G^n\right\|\\
	&\le C\Big(\Big\|\int_{\Gamma_{\theta,\kappa}\backslash\Gamma_{\theta,\kappa}^\tau}e^{zt_n}\big(\beta (z)^\alpha+A\big)^{-1}\beta (z)^{\alpha-1}z^{-1}dz\Big\|+\Big\|\int_{\Gamma_{\theta,\kappa}^\tau}e^{zt_n}\mathcal{A}(z) dz\Big\|\Big)\|f(0)\|\\
	&~~~+C\Big(\Big\|\int_{\Gamma_{\theta,\kappa}\backslash\Gamma_{\theta,\kappa}^\tau}e^{zt_n}\big(\beta (z)^\alpha+A\big)^{-1}\beta (z)^{\alpha-1}z^{-2}dz\Big\|+\Big\|\int_{\Gamma_{\theta,\kappa}^\tau}e^{zt_n}\mathcal{B} (z)dz\Big\|\Big)\|f'(0)\|\\
	&:= C(I+II)\|f(0)\|+C(III+IV)\|f'(0)\|,
	\end{aligned}
	$$
	where $\mathcal{A}(z)$ and $\mathcal{B}(z)$ are defined in Lemma \ref{l4.3}.
	
	By using Lemma \ref{l2.2}, we have the estimate for $I$ and $III$ that
	$$
	I\le \int_{\Gamma_{\theta,\kappa}\backslash\Gamma_{\theta,\kappa}^\tau}\left| e^{zt_n} \right|\left| z \right|^{-2}\left| dz \right| \le \int_{\frac{\pi }{\tau \sin\theta}}^{\infty}r^{-2}e^{rt_n\cos\theta}dr\le C\tau^2t_n^{-1},
	$$
	$$
	III\le \int_{\Gamma_{\theta,\kappa}\backslash\Gamma_{\theta,\kappa}^\tau}\left| e^{zt_n} \right|\left| z \right|^{-3}\left| dz \right| \le \int_{\frac{\pi }{\tau \sin\theta}}^{\infty}r^{-2}e^{rt_n\cos\theta}r^{-1}dr\le C\tau^2.
	$$
	For $II$ and $IV$, it obtains from Lemma \ref{l4.3} that
	$$
	II\le C\tau^2\int_{\Gamma_{\theta,\kappa}^\tau}|e^{zt_n}| |dz|\le C\tau^2t_n^{-1},
	$$
	$$
	IV\le C\tau^2\int_{\Gamma_{\theta,\kappa}^\tau}|e^{zt_n}||z|^{-1} |dz|\le C\tau^2.
	$$
	Therefore, the estimate is derived from the above arguments.
	\end{proof}
	
	Next we analyze the error estimate for the case $f(t)=\big(t*f''(t)\big)(t)$ with $f(0)=0$ and $G_0=0$. From \eqref{2.3} and \eqref{4.2}, the corresponding solution can be represented by
	\begin{equation}\label{4.13}
	G(t_n)=\big(\mathscr{E}(t)*f(t)\big)(t_n)=\big(\mathscr{E}(t)*t*f''(t)\big)(t_n)=\big((\mathscr{E}(t)*t)*f''(t)\big)(t_n),
	\end{equation}
	where
	\begin{equation}\label{Et}
	\mathscr{E}(t)=\frac{1}{2\pi\mathbf{i}}\int_{\Gamma_{\theta,\kappa}}e^{zt}\big(\beta(z)^\alpha+A\big)^{-1}\beta(z)^{\alpha-1}dz.
	\end{equation}
	
	For $f(0)=0$ and $G_0=0$, \eqref{3.6} becomes
	$$
	\begin{aligned}
	\frac{1}{\tau^{\alpha}}\sum\limits_{j=0}^{n-1}w_{j}^{(\alpha)}e^{-t_j\rho U(x)}G^{n-j}+AG^n&=\frac{1}{\tau^{\alpha-1}}\sum\limits_{j=0}^{n-1}w_{j}^{(\alpha-1)}e^{-t_j\rho U(x)}f^{n-j}.
	\end{aligned}
	$$
	Then multiplying $\zeta^n$ on both sides and summing up from $1$ to $\infty$, we can obtain
	\begin{equation}\label{gnzetan}
	\sum\limits_{n=1}^{\infty}G^n\zeta^n=\big(a_\tau(e^{-\tau \rho U(x)}\zeta )^\alpha +A\big)^{-1}a_\tau(e^{-\tau \rho U(x)}\zeta )^{\alpha-1}\sum_{n=1}^{\infty}f(t_n)\zeta^n.
	\end{equation}
	By Cauchy’s integral formula and Cauchy’s integral theorem, it obtains that 
	\begin{equation}\label{etaunzetan}
	\big(a_\tau(e^{-\tau \rho U(x)}\zeta )^\alpha +A\big)^{-1}a_\tau(e^{-\tau \rho U(x)}\zeta )^{\alpha-1}=\sum_{n=0}^{\infty}\mathscr{E}_\tau^n\zeta^n,
	\end{equation}
	where
	$$
	\mathscr{E}_{\tau}^n=\frac{\tau}{2\pi\mathbf{i}}\int_{\Gamma_{\theta,\kappa}^\tau}e^{zt_n}\big(a_\tau(e^{-\tau \beta(z)})^\alpha+A\big)^{-1}a_\tau(e^{-\tau \beta(z)})^{\alpha-1}dz.
	$$
	Thus \eqref{gnzetan} and \eqref{etaunzetan} imply that
	$$
	G^n=\sum_{j=1}^{n}\mathscr{E}_{\tau}^{n-j}f(t_j)=\big(\mathscr{E}_{\tau}(t)*f(t)\big)(t_n),
	$$
	where
	\begin{equation}\label{etaut}
	\mathscr{E}_{\tau}(t)=\sum_{j=0}^{\infty}\mathscr{E}_{\tau}^j\delta_{t_j}(t).
	\end{equation}
	Moreover, the discrete solution of \eqref{3.6} with $f(t)=(t*f'')(t)$ and $G_0=0$ can be formulated by
	\begin{equation}\label{4.14}
	G^n=(\mathscr{E}_{\tau}*f)(t_n)=(\mathscr{E}_{\tau}*t*f'')(t_n)=\big((\mathscr{E}_{\tau}*t)*f''\big)(t_n).
	\end{equation}

	\begin{theorem}\label{t4.5}
	Let $U(x)$ be bounded in $\bar{\Omega}$, $G(t_n)$ and $G^n$ be the solutions of \eqref{2.1} and \eqref{3.6} respectively, and $\tau\leq \tau_*$ with $\tau_*$ being a positive constant. If $f(t)=(t*f'')(t)$ and $G_0=0$, then we have
	$$\left\| G(t_n)-G^n \right\|\le C\tau^2\int_{0}^{t_n}\| f''(s)\|ds.$$
	\end{theorem}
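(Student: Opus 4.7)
The plan is to reduce the bound to a uniform operator estimate via convolution associativity, and then prove the uniform estimate by a contour-integral argument mirroring Theorem \ref{t4.3}.

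From \eqref{4.13} and \eqref{4.14}, setting $\mathcal{K}(r) := (\mathscr{E}*t)(r) - (\mathscr{E}_\tau*t)(r)$, the error has the convolution form $G(t_n) - G^n = \int_0^{t_n}\mathcal{K}(t_n - s)f''(s)\,ds$, so by the triangle inequality,
\begin{equation*}
\|G(t_n) - G^n\| \le \Big(\sup_{r\in(0,t_n]}\|\mathcal{K}(r)\|\Big)\int_0^{t_n}\|f''(s)\|\,ds.
\end{equation*}
It therefore suffices to establish $\|\mathcal{K}(r)\| \le C\tau^2$ uniformly in $r$.

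For grid points $r = t_n$, the continuous operator $(\mathscr{E}*t)(t_n)$ admits the usual contour representation on $\Gamma_{\theta,\kappa}$ with integrand $e^{zt_n}(\beta(z)^\alpha + A)^{-1}\beta(z)^{\alpha-1}z^{-2}$. The discrete counterpart can be expressed as
\begin{equation*}
(\mathscr{E}_\tau*t)(t_n) = \frac{1}{2\pi\mathbf{i}}\int_{\Gamma_{\theta,\kappa}^\tau} e^{zt_n}\big(a_\tau(e^{-\beta(z)\tau})^\alpha+A\big)^{-1}a_\tau(e^{-\beta(z)\tau})^{\alpha-1}\frac{\tau^2 e^{-z\tau}}{(1-e^{-z\tau})^2}\,dz,
\end{equation*}
obtained by multiplying the generating function identity \eqref{etaunzetan} by $\sum_{n\ge 0}(n\tau)\zeta^n = \tau\zeta/(1-\zeta)^2$, applying Cauchy's integral formula on $|\zeta| = e^{-(\kappa+1)\tau}$, substituting $\zeta = e^{-z\tau}$, and deforming to $\Gamma_{\theta,\kappa}^\tau$. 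Subtracting the two representations and splitting the contour into the tail on $\Gamma_{\theta,\kappa}\setminus\Gamma_{\theta,\kappa}^\tau$ (handled by Lemma \ref{l2.2}) and the principal part on $\Gamma_{\theta,\kappa}^\tau$ with integrand $e^{zt_n}\mathcal{B}(z)$ (handled by Lemma \ref{l4.3}(ii)) reproduces the $III + IV$ analysis of Theorem \ref{t4.3} and yields $\|\mathcal{K}(t_n)\| \le C\tau^2$.

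To extend this estimate to non-grid times $r \in (t_{n-1}, t_n)$, I would use the Laplace representation $(\mathscr{E}_\tau*t)(r) = \frac{1}{2\pi\mathbf{i}}\int_{\Gamma_{\theta,\kappa}}e^{zr}F(e^{-z\tau})z^{-2}\,dz$, where $F$ is the generating function from \eqref{etaunzetan}, together with the operator estimate $\|(\beta(z)^\alpha+A)^{-1}\beta(z)^{\alpha-1} - F(e^{-z\tau})\| \le C\tau^2|z|$, which follows from Lemmas \ref{l2.2}, \ref{l2.5} and \ref{l2.7} in the spirit of Lemma \ref{l4.2}. The main technical obstacle is making this last step uniform in $r$: one needs a bound on $\int|e^{zr}||z|^{-1}|dz|$ over $\Gamma_{\theta,\kappa}^\tau$ that is independent of $r$, which requires delicate handling of the arc $|z| = \kappa$ and the truncation point $|z| \sim \pi/(\tau\sin\theta)$ to avoid logarithmic losses that would degrade the optimal $O(\tau^2)$ rate.
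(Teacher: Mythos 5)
Your reduction to a uniform bound on $\mathcal{K}(r)=\big((\mathscr{E}-\mathscr{E}_\tau)*t\big)(r)$ and your treatment of the grid points $r=t_n$ coincide with the paper: it derives exactly the two contour representations you write down and invokes the $III+IV$ analysis of Theorem \ref{t4.3} (via Lemma \ref{l4.3}(ii)) to get $\|\mathcal{K}(t_n)\|\le C\tau^2$. The gap is the extension to $r\in(t_{n-1},t_n)$, which you correctly identify as the crux but leave unresolved. Your proposed route --- a contour representation of $(\mathscr{E}_\tau*t)(r)$ at non-grid $r$ --- is itself problematic: the identity \eqref{etaunzetan} yields only the grid values $\mathscr{E}_\tau^n$ through Cauchy's formula, $\mathscr{E}_\tau$ is a sum of Dirac masses so $(\mathscr{E}_\tau*t)(r)$ is merely piecewise linear between grid points, and the deformation of the inversion contour to the truncated $\Gamma_{\theta,\kappa}^\tau$ exploits the $2\pi/\tau$-periodicity of the discrete symbol in $\operatorname{Im}z$, which is only clean when $r$ is a multiple of $\tau$. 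Moreover, as you yourself note, the resulting integrand $|e^{zr}|\,|z|^{-1}$ is only logarithmically integrable near the truncation point, so even granting the representation the naive estimate would cost a factor $|\log\tau|$.

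The paper sidesteps all of this with an elementary device you are missing: a Taylor expansion of $\mathcal{K}$ about the nearest grid point. For $t\in(t_{n-1},t_n]$,
$$
\|\mathcal{K}(t)\|\le\|\mathcal{K}(t_n)\|+|t-t_n|\,\big\|\big((\mathscr{E}-\mathscr{E}_\tau)*1\big)(t_n)\big\|+\Big\|\int_{t_n}^{t}(t-s)(\mathscr{E}-\mathscr{E}_\tau)(s)\,ds\Big\|,
$$
and each term is $O(\tau^2)$: the first by the grid-point estimate you already have; the second because $\big\|\big((\mathscr{E}-\mathscr{E}_\tau)*1\big)(t_n)\big\|\le C\tau$ (from the same contour comparison using $\big|\frac{\tau e^{-z\tau}}{1-e^{-z\tau}}-z^{-1}\big|\le C\tau$) while $|t-t_n|\le\tau$; and the third because $\|\mathscr{E}(s)\|\le C$, $\|\mathscr{E}_\tau^j\|\le C\tau$ and the integration interval has length at most $\tau$. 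In this way only grid-point contour estimates are ever needed. If you insert this step your argument closes; without it the claimed uniform bound $\|\mathcal{K}(r)\|\le C\tau^2$ is not established.
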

	\begin{proof}
    By the definition of $\mathscr{E}_{\tau}(t)$ in \eqref{etaut}, we have
    $$
      \left(\mathscr{E}_\tau * t\right)\left(t_n\right)=\sum_{j=1}^n \mathscr{E}_\tau^{n-j} t_j.
    $$ 
    Then, with \eqref{etaunzetan}, we can obtain that
    $$
    \begin{aligned}
    \sum_{n=1}^{\infty}\left(\mathscr{E}_\tau * t\right)\left(t_n\right) \zeta^n=\sum_{n=1}^{\infty} \sum_{j=1}^n \mathscr{E}_\tau^{n-j} t_j \zeta^n=\left(\sum_{n=0}^{\infty} \mathscr{E}_\tau^n \zeta^n\right)\left(\sum_{n=1}^{\infty} t_n \zeta^n\right)\\
    =\big(a_\tau(e^{-\tau \rho U(x)}\zeta )^\alpha +A\big)^{-1}a_\tau(e^{-\tau \rho U(x)}\zeta )^{\alpha-1}\frac{\tau\zeta}{(1-\zeta)^2}.
    \end{aligned}
    $$
    By Cauchy’s integral formula and Cauchy’s integral theorem, we can derive that
    $$
    \begin{aligned}
      \left(\mathscr{E}_\tau * t\right)\left(t_n\right)
      =\frac{1}{2 \pi \mathbf{i}} \int_{\Gamma_{\theta, \kappa}^\tau} e^{z t_n} \big(a_\tau(e^{- \beta(z)\tau})^\alpha +A\big)^{-1}a_\tau(e^{- \beta(z)\tau})^{\alpha-1}\frac{\tau^2e^{-z\tau}}{(1-e^{-z\tau})^2}dz.
    \end{aligned}
    $$
    It obtains from \eqref{Et} that 
    $$
    (\mathscr{E}*t)\left(t_n\right)=\frac{1}{2 \pi \mathbf{i}} \int_{\Gamma_{\theta, \kappa}} e^{z t_n} \big(\beta (z)^\alpha+A\big)^{-1}\beta (z)^{\alpha-1}z^{-2}dz.
    $$
	Hence, the proof for the term $f(t)=tf'(0)$ in Theorem \ref{t4.3} shows that
    \begin{equation*}
      \big\|\big((\mathscr{E}-\mathscr{E}_{\tau})*t\big)(t_n)\big\| \le C\tau^2.
    \end{equation*}
    
    Similarly, we can also derive that
    $$
    \begin{aligned}
      &\left(\mathscr{E}_\tau * 1\right)\left(t_n\right)
      =\frac{1}{2 \pi \mathbf{i}} \int_{\Gamma_{\theta, \kappa}^\tau} e^{z t_n} \big(a_\tau(e^{- \beta(z)\tau})^\alpha +A\big)^{-1}a_\tau(e^{- \beta(z)\tau})^{\alpha-1}\frac{\tau e^{-z\tau}}{1-e^{-z\tau}}dz,\\
      &(\mathscr{E}*1)\left(t_n\right)=\frac{1}{2 \pi \mathbf{i}} \int_{\Gamma_{\theta, \kappa}} e^{z t_n} \big(\beta (z)^\alpha+A\big)^{-1}\beta (z)^{\alpha-1}z^{-1}dz.
    \end{aligned}
    $$
    As $|\frac{\tau e^{-z\tau}}{1-e^{-z\tau}}-z^{-1}|\le C\tau$ holds by \cite[(C.1)]{GunzburgerLW:2019b}, it leads to the following estimate
    \begin{equation*}
      \big\|\big((\mathscr{E}-\mathscr{E}_{\tau})*1\big)(t_n)\big\| \le C\tau.
    \end{equation*}

    Taking Taylor expansion of $\big((\mathscr{E}-\mathscr{E}_{\tau})*t\big)(t)$ at $t=t_n$, then it implies that
    \begin{equation*}
    \begin{aligned}
      \big\|\big((\mathscr{E}-\mathscr{E}_{\tau})*t\big)(t)\big\|
      &\le\big\|\big((\mathscr{E}-\mathscr{E}_{\tau})*t\big)(t_n)\big\|+|t-t_n|\big\|\big((\mathscr{E}-\mathscr{E}_{\tau})*1\big)(t_n)\big\|\\
      &~~~~+\big\|\int_{t_n}^t(t-s)(\mathscr{E}-\mathscr{E}_{\tau})(s)ds\big\|\\
      &\le C\tau^2,~~\forall~t\in(t_{n-1},t_n],
    \end{aligned}
    \end{equation*}
    where $\|\mathscr{E}(t)\|\le C$ and $\|\mathscr{E}_\tau^n(t)\|\le C\tau$ is applied.
	Therefore, we derive that
	\begin{equation*}
	\begin{aligned}
	  \|G(t_n)-G^n\|&\le \big\|\big(\big((\mathscr{E}-\mathscr{E}_{\tau})*t\big)*f''\big)(t_n)\big\|\\
	  &\le\int_0^{t_n}\big\|\big((\mathscr{E}-\mathscr{E}_{\tau})*t\big)(t_n-s)\big\|\cdot\|f''(s)\|ds\\
  	  &\le C\tau^2\int_{0}^{t_n}\|f''(s)\|ds.
	\end{aligned}
	\end{equation*}
    This completes the proof.
	\end{proof}
	
	Combing the Taylor expansion $f(t)=f(0)+tf'(0)+(t*f'')(t)$ and the results in Theorems \ref{t4.2}, \ref{t4.3} and \ref{t4.5}, we can easily obtain the error estimate of inhomogeneous problem with nonsmooth initial data.
	\begin{theorem}\label{t4.6}
	Let $U(x)$ be bounded in $\bar{\Omega}$, $G(t_n)$ and $G^n$ be the solutions of \eqref{2.1} and \eqref{3.6} respectively, and $\tau\leq \tau_*$ with $\tau_*$ being a positive constant. If $G_0\in L^2(\Omega), f\in C^1([0,T],L^2(\Omega))$ and $\int_{0}^{t}\|f''(s)\|ds<\infty$, we have
	$$
	\|G(t_n)-G^n\|\le C\tau^2\Big(t_n^{-2}\|G_0\|+t_n^{-1}\|f(0)\|+\|f'(0)\|+\int_{0}^{t_n}\|f''(s)\|ds\Big).
	$$
	\end{theorem}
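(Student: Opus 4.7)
The proof is essentially a decomposition argument leveraging linearity of both the continuous equation \eqref{2.1} and the discrete scheme \eqref{3.6} in the data $(G_0, f)$. The plan is to split the data into three pieces, apply the already-proven Theorems \ref{t4.2}, \ref{t4.3} and \ref{t4.5} to each piece, and assemble the bounds via the triangle inequality.

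First I would write the Taylor expansion with integral remainder,
\begin{equation*}
f(t) = f(0) + t f'(0) + \int_0^t (t-s) f''(s)\, ds = f(0) + t f'(0) + (t * f'')(t),
\end{equation*}
which is justified under the assumptions $f\in C^1([0,T],L^2(\Omega))$ and $\int_0^t\|f''(s)\|\,ds<\infty$. Then I would split the data as $(G_0, f) = (G_0, 0) + (0, f(0)+tf'(0)) + (0, (t*f'')(t))$. Because the continuous problem \eqref{2.1} is linear in $(G_0,f)$, the exact solution $G(t_n)$ decomposes as the sum of the three corresponding solutions $G^{(1)}(t_n)$, $G^{(2)}(t_n)$, $G^{(3)}(t_n)$. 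Likewise, the discrete scheme \eqref{3.6} is linear in $(G_0, f^0, f^1, \ldots)$ — the coefficients $w_j^{(\alpha)}$, $w_j^{(\alpha-1)}$ and the operator $A$ act linearly — so the discrete solution $G^n$ decomposes as $G^{n,(1)}+G^{n,(2)}+G^{n,(3)}$ with the same data splitting.

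Next, applying Theorem \ref{t4.2} to the first piece yields $\|G^{(1)}(t_n)-G^{n,(1)}\|\le C\tau^2 t_n^{-2}\|G_0\|$; Theorem \ref{t4.3} to the second piece yields $\|G^{(2)}(t_n)-G^{n,(2)}\|\le C\tau^2(t_n^{-1}\|f(0)\|+\|f'(0)\|)$; Theorem \ref{t4.5} to the third piece yields $\|G^{(3)}(t_n)-G^{n,(3)}\|\le C\tau^2\int_0^{t_n}\|f''(s)\|\,ds$. The triangle inequality then delivers the stated estimate.

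There is no serious obstacle here: the only point requiring care is confirming that the discrete scheme \eqref{3.6}, including the correction terms $\tfrac{1}{2\tau^\alpha}w_{n-1}^{(\alpha)}e^{-t_n\rho U(x)}G_0$ and $\tfrac{1}{2\tau^{\alpha-1}}w_{n-1}^{(\alpha-1)}e^{-t_{n-1}\rho U(x)}f^0$, respects the splitting — the first correction depends linearly on $G_0$ (hence lives entirely in the first piece) and the second depends linearly on $f^0=f(0)$ (hence lives in the second piece, with no contribution to the third since $f(0)=0$ there). Once this is observed, the result follows directly from the three preceding theorems.
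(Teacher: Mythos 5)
Your proposal is correct and follows exactly the paper's argument: the paper likewise combines the Taylor expansion $f(t)=f(0)+tf'(0)+(t*f'')(t)$ with the linearity of the equation and the scheme, and applies Theorems \ref{t4.2}, \ref{t4.3} and \ref{t4.5} to the three pieces. Your additional check that the correction terms respect the data splitting is a sensible detail the paper leaves implicit.
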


	\section{Numerical examples}\label{Section5}
	In this section, we perform some numerical experiments to verify the effectiveness of the proposed WSGD scheme \eqref{3.6} for solving the backward fractional Feynman-Kac equation \eqref{eq:bfFKe} with piecewise linear finite element approximation in space, and illustrate the theoretical analysis results in Section \ref{Section4}. Let $G_\tau$ be the numerical solution of $G$ at the final time $T$ with the step size $\tau$, we test the temporal error by
	\begin{equation*}
	E_\tau=\|G_\tau-G_{\tau/2} \|,
	\end{equation*}
	as the exact solution $G$ is unknown. Consequently, the temporal convergence rates can be calculated by the following formula
	\begin{equation*}
	\mathrm{Rate}=\frac{\ln(E_\tau/E_{\tau/2})}{\ln(2)}.
	\end{equation*}

	\subsection{One dimensional case}
	We first consider the one dimensional problem and $\Omega=(0,1)$. The spatial interval is equally divided into subintervals with a mesh size $h=1/128$ for the finite element discretization.
    \begin{example}\label{e5.1}
	We consider the homogeneous problem with nonsmooth initial data and verify the temporal convergence rate of the WSGD scheme \eqref{3.6}, and take $$
	\rho=-1+\mathbf{i},~ f(x,\rho,t)=0,~ G_0(x)=\chi_{(0,0.5)}(x),~ U(x)=\chi_{(0.5,1)}(x),~ T=1,
	$$
	where $\chi_{(a,b)}$ denotes the characteristic function over $(a,b)$,
	$$
	\chi_{(a,b)}(x)=\left\{\begin{aligned}
	&1,\ x\in (a,b),\\
	&0,\ \text{otherwise}.
	\end{aligned}\right.
	$$
    \end{example}
    
	\begin{table}[htpb]
	\caption{Temporal errors and convergence rates at $T$ for Example \ref{e5.1}.}\label{table1}
	\centering
	\begin{tabular}{cccccc}
	\toprule
	$\alpha\backslash\tau$&1/10 &1/20 &1/40 &1/80 &Rate	\\
	\midrule
	0.3& 4.8369E-05 & 1.1142E-05 & 2.6726E-06 & 6.5445E-07&$\approx$2.07(2.00)	\\
	0.5& 8.8909E-05 & 2.0464E-05 & 4.8980E-06 & 1.1978E-06&$\approx$2.07(2.00) \\
	0.7& 1.3446E-04 & 3.1062E-05 & 7.4008E-06 & 1.8043E-06&$\approx$2.07(2.00)	\\
	\bottomrule
	\end{tabular}
	\end{table}
	
	Taking $\alpha=0.3, 0.5, 0.7$ and $\tau=1/10, 1/20, 1/40, 1/80$, Table \ref{table1} presents the temporal $L^2$ errors and convergence rates at $T$ for Example \ref{e5.1} by the WSGD scheme \eqref{3.6}. We observe that the numerical temporal convergence rate is of second order for all three fractional orders, which is consistent with the theoretical result in Theorem \ref{t4.2}.

    \begin{example}\label{e5.2}
	We consider the inhomogeneous problem with zero initial data, and take $$
    \rho=-1,~ f(x,\rho,t)=x(1-x)e^{-t\rho\chi_{(0.5,1)}(x) },~ G_0(x)=0,~ U(x)=\chi_{(0.5,1)}(x),~ T=1.
    $$
    \end{example}

	\begin{table}[htpb]
	\caption{Temporal errors and convergence rates at $T$ for Example \ref{e5.2}.}
	\label{table3}
	\centering
	\begin{tabular}{cccccc}
	\toprule
	$\alpha\backslash\tau$&1/10 &1/20 &1/40 &1/80 &Rate	\\
	\midrule
	0.3& 4.7712E-05 & 1.2368E-05 & 3.1461E-06 & 7.9328E-07&$\approx$1.97(2.00)	\\
	0.5& 2.4018E-05 & 6.3733E-06 & 1.6370E-06 & 4.1458E-07&$\approx$1.95(2.00) \\
	0.7& 4.7152E-06 & 1.2328E-06 & 3.2488E-07 & 8.3728E-08&$\approx$1.94(2.00) \\
	\bottomrule
	\end{tabular}
	\end{table}

	The temporal convergence of the WSGD scheme \eqref{3.6} is verified for Example \ref{e5.2} with $\alpha=0.3, 0.5, 0.7$ and $\tau=1/10, 1/20, 1/40, 1/80$. The temporal errors and convergence rates at $T$ are presented in Table \ref{table3}. Second-order convergence rate is also observed, which verifies the result of Theorem \ref{t4.6}.

    \begin{example}\label{e5.3}
    The inhomogeneous problem with nonsmooth initial data is considered by taking $$
    \rho=-1,~ f(x,\rho,t)=x(1-x)e^{-t\rho\chi_{(0.5,1)}(x) },~ G_0(x)=\chi_{(0,0.5)}(x),~ U(x)=\chi_{(0.5,1)}(x),~ T=1.
    $$
    \end{example}

	\begin{table}[htpb]
	\caption{Temporal errors and convergence rates at $T$ for Example \ref{e5.3}.}
	\label{table5}
	\centering
	\begin{tabular}{cccccc}
	\toprule
	$\alpha\backslash\tau$&1/10 &1/20 &1/40 &1/80 &Rate	\\
	\midrule
	0.3& 9.3834E-05 & 2.3020E-05 & 5.7054E-06 & 1.4205E-06& $\approx$2.01(2.00)	\\
	0.5& 1.1269E-04 & 2.6875E-05 & 6.5344E-06 & 1.6128E-06& $\approx$2.04(2.00) \\
	0.7& 1.4131E-04 & 3.3013E-05 & 7.9159E-06 & 1.9364E-06& $\approx$2.06(2.00) \\
	\bottomrule
	\end{tabular}
	\end{table}
	
    For $\alpha=0.3, 0.5, 0.7$ and $\tau=1/10, 1/20, 1/40, 1/80$, the temporal errors and convergence rates at $T$ by the WSGD scheme \eqref{3.6} for Example \ref{e5.3} are presented in Table \ref{table5}, which shows the second-order convergence rate and agrees well with the theoretical result in Theorem \ref{t4.6}. The above numerical results illustrate that our proposed scheme \eqref{3.6} is effective for numerically solving the backward fractional Feynman-Kac equation \eqref{eq:bfFKe} in one dimensional case.

    \subsection{Two dimensional case}
    Next we show the numerical efficiency of the WSGD scheme \eqref{3.6} for solving the backward fractional Feynman-Kac equation \eqref{eq:bfFKe} in two dimensional case and verify the theoretical convergence results. The spatial domain $\Omega=(0,1)^2$ is considered and uniformly partitioned into triangles with the mesh size $h=1/128$ for the finite element discretization.
    \begin{example}\label{e5.4}
    We set $T=1$, $\rho=-1$, $x=(x_1,x_2)$,
    $$
    f(x,\rho,t)=x_1(1-x_1)x_2(1-x_2)e^{-t\rho U(x) },~ G_0(x)=\chi_{(0,0.5)\times(0,0.5)}(x_1,x_2),
    $$
    and consider three cases of $U(x)$ as follows
    $$
    \left\{
    \begin{aligned}
    &\text{case}\ \text{(a)}:U(x)=\chi_{(0.5,1)\times(0.5,1)}(x_1,x_2),\\
    &\text{case}\ \text{(b)}:U(x)=x_1+x_2,\\
    &\text{case}\ \text{(c)}:U(x)=x_1^2+x_2^2,\\
    \end{aligned}\right.
    $$
    where $\chi_{(a,b)\times(c,d)}(x_1,x_2)$ denotes the characteristic function on $(a,b)\times(c,d)$, i.e.
    $$
    \chi_{(a,b)\times(c,d)}(x_1,x_2)=\left\{\begin{aligned}
    &1,\ (x_1,x_2)\in (a,b)\times(c,d),\\
    &0,\ \text{otherwise}.
    \end{aligned}\right.
    $$
    \end{example}
    
    \begin{table}[H]
    \caption{Temporal errors and convergence rates at $T$ for Example \ref{e5.4}.}
    \label{table7}
    \centering
    \begin{tabular}{ccccccc}
    \toprule
    case & $\alpha\backslash\tau$ & 1/10 & 1/20 & 1/40 & 1/80 & Rate\\
    \midrule
    \multirow{2}{*}{(a)}&0.2&1.0561E-05&2.5062E-06&6.1101E-07&1.5089E-07&$\approx$2.04(2.00) \\
    &0.8&2.6980E-05 &6.2089E-06&1.4771E-06 & 3.5962E-07& $\approx$2.07(2.00)\\
    \midrule
    \multirow{2}{*}{(b)}&0.2&2.7957E-05&6.7950E-06&1.6771E-06&4.1676E-07&$\approx$2.02(2.00) \\
    &0.8&5.3769E-05 &1.2412E-05&2.9587E-06 & 7.2115E-07& $\approx$2.07(2.00)\\
    \midrule
    \multirow{2}{*}{(c)}&0.2&1.4881E-05&3.5516E-06&8.6835E-07&2.1475E-07&$\approx$2.03(2.00) \\
    &0.8&3.4592E-05 &7.9669E-06&1.8963E-06 & 4.6183E-07& $\approx$2.07(2.00)\\
    \bottomrule
    \end{tabular}
    \end{table}
    
    The temporal errors and convergence rates at $T$ by the WSGD scheme \eqref{3.6} for Example \ref{e5.4} are presented in Table \ref{table7} with $\alpha=0.2, 0.8$ and $\tau=1/10, 1/20, 1/40, 1/80$, which is also consistent with the result of Theorem \ref{t4.6}. It reveals that the proposed WSGD scheme performs effectively and converges numerically by the theoretical rate for the backward fractional Feynman-Kac equation \eqref{eq:bfFKe} in two dimensional case.

	\section{Conclusion}\label{Section6}
    The fractional Feynman-Kac equation \eqref{eq:bfFKe} has a wide of applications for describing the distribution of the particles' path functionals in the anomalous diffusion process. The time-space coupled non-local operator and complex parameters in the equation bring challenges on numerical analysis and computations. We propose an efficient second-order time-stepping scheme \eqref{3.6} for solving \eqref{eq:bfFKe} based on the weighted and shifted Gr\"{u}nwald difference formula, which also weakens the regularity requirement of the function $U(x)$. The error estimates are rigorously analyzed for the inhomogeneous problem with nonsmooth initial data, depending only on the regularity assumptions on the data, without the requirement of regularity of the exact solution. The temporal convergence rate $O(\tau^2)$ is verified by some numerical examples.
%
%

\end{document}